\documentclass{amsart}
\usepackage{graphicx} % Required for inserting images
\usepackage{pinlabel} %for labels of figures
\usepackage{amssymb} % For \Bbbk and other math symbols
\usepackage{mathrsfs} % For \mathscr command 
\usepackage[all]{xy}
\usepackage{geometry}
\usepackage{enumerate}

\usepackage{aliascnt} %To make autoref work as expected
\usepackage[colorlinks=true,linkcolor=blue,citecolor=red,urlcolor=red,citebordercolor={0 0 1},urlbordercolor={0 0 1},linkbordercolor={0 0 1}]{hyperref} %needs to be loaded after most things
\usepackage[nameinlink]{cleveref}

\renewcommand{\bar}{\overline}
\renewcommand{\tilde}{\widetilde}
\newcommand{\tensor}{\otimes}
\def\gitq{/\!/}
\newcommand{\base}{\Bbbk}
\newcommand{\minus}{\smallsetminus}
\newcommand{\sS}{{\textsf{S}}}
\newcommand{\into}{\hookrightarrow}
\newcommand{\co}{\colon}

\newcommand{\bA}{\mathbb{A}}
\newcommand{\bC}{\mathbb{C}}
\newcommand{\bR}{\mathbb{R}}
\newcommand{\bG}{\mathbb{G}}
\newcommand{\bP}{\mathbb{P}}
\newcommand{\CP}{\mathbb{C}\mathbb{P}}
\newcommand{\bZ}{\mathbb{Z}}
\newcommand{\cC}{\mathcal{C}}

\newcommand{\cK}{\mathcal{K}}
\newcommand{\cL}{\mathcal{L}}
\newcommand{\cM}{\mathcal{M}}
\newcommand{\oh}{\mathcal{O}}
\newcommand{\cS}{\mathcal{S}}
\newcommand{\cU}{\mathcal{U}}
\newcommand{\cV}{\mathcal{V}}
\newcommand{\cW}{\mathcal{W}}
\newcommand{\cX}{\mathcal{X}}

\newcommand{\bQ}{\mathbb{Q}}
\newcommand{\bS}{\mathbb{S}}
\newcommand{\cG}{\mathcal{G}}
\newcommand{\cB}{\mathcal{B}}

\newcommand{\Spec}{\operatorname{Spec}}
\newcommand{\Proj}{\operatorname{Proj}}

\newcommand{\Hom}{\operatorname{Hom}}
\newcommand{\Pic}{\operatorname{Pic}}
\newcommand{\Aut}{\operatorname{Aut}}
\newcommand{\Gr}{\operatorname{Gr}}
\newcommand{\QCoh}{\operatorname{QCoh}}

\newcommand{\SL}{\operatorname{SL}}
\newcommand{\GL}{\operatorname{GL}}

\newcommand{\Sym}{\operatorname{Sym}}
\newcommand{\rk}{\operatorname{rk}}
\newcommand{\ps}{\operatorname{ps}}
\renewcommand{\ss}{\operatorname{ss}}
\newcommand{\pre}{\operatorname{pre}}
\newcommand{\onto}{\twoheadrightarrow}
\newcommand{\mapsonto}{\twoheadrightarrow}

\newcommand{\cartesian}{\ar@{}[ul]|{\square}}
\newcommand{\can}{\hspace{.05cm}\operatorname{can}}
\newcommand{\CM}{\operatorname{CM}}
\newcommand{\xto}{\xrightarrow}

\newcommand{\Hilb}{\operatorname{Hilb}}

\newcommand{\Quot}{\operatorname{Quot}}
\newcommand{\Chow}{\operatorname{Chow}}
\newcommand{\Ch}{\operatorname{Ch}}

\newcommand{\cCoh}{\mathcal{C}\mathit{oh}}
\newcommand{\cBun}{\mathcal{B}\mathit{un}} 
\newcommand{\Fanos}{\mathcal{F}\hspace{-.08cm}\mathit{anos}}

\DeclareMathOperator{\Grad}{Grad}
\DeclareMathOperator{\Filt}{Filt}
\DeclareMathOperator{\Map}{Map}
\DeclareMathOperator{\wt}{wt}
\DeclareMathOperator{\ev}{ev}
\DeclareMathOperator{\univ}{univ}
\DeclareMathOperator{\ADeg}{ADeg}
\DeclareMathOperator{\ST}{\overline{ST}}

\newcommand{\Deg}{\mathscr{D}\hspace{-.04cm}\mathit{eg}}
\newcommand{\Comp}{\mathscr{C}\hspace{-.04cm}\mathit{omp}}

\newcommand{\all}{\text{all}}
\newcommand{\irr}{\text{irr}}

\newcommand{\NE}{\text{NE}}
\newcommand{\st}{\text{st}}

\renewcommand{\H}{\mathrm{H}}
\newcommand{\R}{\mathrm{R}}

\theoremstyle{plain}
\newtheorem{theorem}[equation]{Theorem}
\newtheorem{corollary}[equation]{Corollary}
\newtheorem{lemma}[equation]{Lemma}
\newtheorem{proposition}[equation]{Proposition}

\theoremstyle{definition}
\newtheorem{definition}[equation]{Definition}
\newtheorem{example}[equation]{Example}

\newtheorem{problem}[equation]{Problem}
\newtheorem{question}[equation]{Question}
\newtheorem{refined-question}[equation]{Refined Question}
\newtheorem{conjecture}[equation]{Conjecture}

\theoremstyle{remark}
\newtheorem{remark}[equation]{Remark}

\numberwithin{equation}{section}

\renewcommand{\tag}[1]{\cite[Tag\,\href{http://stacks.math.columbia.edu/tag/#1}{#1}]{stacks-project}}

\usepackage[utf8]{inputenc} % usually not needed (loaded by default)
\usepackage[T1]{fontenc}

\usepackage[nameinlink]{cleveref}

\begin{document}

\title{The intrinsic approach to moduli theory}
\begin{abstract}
Moduli theory has captured the imagination of algebraic geometers for at least two centuries.  Up
until the end of the 20th century, moduli spaces were constructed and studied by rigidifying the
moduli problem using extrinsic data and applying geometric invariant theory. Over the last several
decades, there has been a paradigm shift toward studying moduli problems intrinsically using the
language of algebraic stacks.  We highlight recent advances in this direction that have incorporated
ideas from geometric invariant theory to develop a structure theory for algebraic stacks. In the
ideal situation, it allows one to decompose an algebraic stack into simpler strata and construct
moduli spaces corresponding to each stratum. In addition to surveying some previous applications of
the theory, we take a forward-looking perspective on the field and identify questions for future
research.
\end{abstract}

\author{Jarod Alper}
\address{
  Department of Mathematics \\ 
  University of Washington \\
  Seattle, Washington, USA}
\email{jarod@uw.edu}

\author{Daniel Halpern-Leistner}
\address{Cornell University, Ithaca, New York, USA}
\email{daniel.hl@cornell.edu}

\maketitle
\setcounter{tocdepth}{1}
\tableofcontents

\section{Introduction}

There is something enchanting about moduli spaces. They simultaneously provide an answer to one of
the most fundamental questions in mathematics, namely the \emph{classification problem}, and are
fascinating spaces in their own right.  These spaces are often projective varieties with a very
rich geometry, reflecting the geometry of the individual objects that they parameterize. It has
been nearly 200 years since Riemann coined the term `moduli' and introduced the moduli spaces of
curves \cite{riemann}. Similar to the profound paradigm shift around 100 years ago from studying
varieties as embedded spaces to abstract spaces, there has been a gradual transition beginning
around 30 years ago from studying moduli spaces as quotients of rigidified parameter spaces (e.g.,
space of all embedded curves) to a more intrinsic approach using algebraic stacks.  While algebraic
stacks were first introduced  in the 1960s \cite{deligne-mumford}, it was not until the 1990s that
they had significant impact in moduli theory. This adoption was spurred by key results such as the
Keel--Mori Theorem \cite{keel-mori} (which for instance implies the existence of $\bar{M}_g$ as a
proper algebraic space) and Kollár's Ampleness Criterion \cite{kollar-projectivity} (which together
with birational methods implies the projectivity of $\bar{M}_g$).  It still took another decade or two
until they entered mainstream algebraic geometry. 

The past decade has seen a new wave of intrinsic methods alongside the discovery of new moduli
spaces. We outline several major advances in the structure theory of algebraic stacks including the local
quotient structure (\S \ref{sec:local-structure}), an intrinsic approach to construct and study good
moduli spaces and $\Theta$-stratifications called `Beyond GIT' (\S \ref{sec:beyond-git}), and combinatorial structures attached
to an algebraic stack, e.g., degeneration spaces (\S \ref{sec:combinatorial-structures}).  We ground these techniques by discussing two specific moduli problems of a very different nature.  In \S \ref{sec:beyond-git}, we use the relatively easy and `linear' moduli problem of semistable vector bundles on a curve to show how each of several approaches in beyond GIT can be effectively applied to construct and study the moduli space. In \S \ref{sec:singular-curves}, we take a quick detour to explore the geometry of $\bar{M}_g$, which has the wonderful feature that many of its birational models are moduli spaces themselves, albeit of different classes of singular curves.  These birational models were recently constructed using a blend of intrinsic and extrinsic methods, 
but challenges remain in extending the beyond GIT techniques to further study these `non-linear' moduli problems of singular varieties.  We take a forward-looking perspective on the field by raising interesting conjectures and problems
prime for future research (\S \ref{sec:open-problems}).

\subsection*{Notation} 
Throughout $\base$ denotes an algebraically closed field of characteristic $0$.
In this paper, we adopt the following notation for quotients.  If $X$ is an algebraic space and $G$
is a group scheme, we denote the quotient stack by $X/G$, not $[X/G]$. If $X$ admits a good quotient
by $G$, or equivalently if $X/G$ admits a good moduli space, then we denote this space $X/\!/G$.
For instance, the GIT quotient of a scheme $X$ by a reductive group $G$ is $X^{\rm ss} /\!/ G$,
where $X^{\rm ss} \subseteq X$ is the open semistable locus, not $X/\!/G$. The second author advocates that the field should adopt these conventions immediately, and free quotient stacks from their years of imprisonment between square-brackets.

\subsection*{Acknowledgements}
We thank Giulio Codogni, Maksym Fedorchuk, Andres Fernandez Herrero, Aaron Landesman, Giovanni
Inchiostro, and Michele Pernice for helpful comments during the preparation of this article.  We
also thank all of our collaborators, especially Jack Hall, Andres Fernandez Herrero, Jochen
Heinloth, Andr\'es Ib\'a\~nez N\'u\~nez, and David Rydh. 

\section{Local Structure of Algebraic Stacks} \label{sec:local-structure}

Quotient stacks form a distinguished class of algebraic stacks.  The geometry of a quotient stack
$X/G$ is particularly well understood---it is simply the $G$-equivariant geometry of $X$, a subject
that has been very well studied over the last century.  As such, quotient stacks provide valuable
geometric intuition for general algebraic stacks.  While many familiar moduli stacks, e.g.,
$\bar{\cM}_g$, $\cBun^{\ss}_{r,d}(C)$, and  $\Fanos^{K-\ss}_{n,V}$ are global quotient stacks, many
other moduli stacks, e.g., the moduli $\cM^{\sigma-\ss}$ of Bridgeland semistable objects with
respect to a stability condition $\sigma$, are not known to be global quotient stacks.  

\subsection{Global quotient stacks}
Certain classes of
stacks are known to be global quotient stacks, e.g., smooth, separated 
Deligne--Mumford (DM) stacks with either generically trivial stabilizer 
\cite[Thm.~2.18]{ekhv}
or quasi-projective coarse moduli space 
\cite[Thm.~4.4]{kresch-geometry-of-DM-stacks},
\cite[Thm.~1.9]{olander-olsson}.

On the other hand, it is actually quite difficult to \emph{prove} that a given
moduli stack is not a quotient stack, related to both arithmetic
properties such as the surjectivity of the Brauer map and geometric properties
such as the existence of faithful vector bundles.  In fact, the only examples we
are aware of are variations of the following: 
\begin{itemize}
  \item the moduli stack $\cM_g^{\pre, \le m}$ parameterizing prestable nodal
  curves with at most $m \ge 2$ nodes \cite[Prop.~5.2]{kresch-flattening} (this
  doesn't even have quasi-affine diagonal), 
  \item a classifying stack $BT$ of a non-split torus $T$ of rank $r \ge 2$ over
  the projective nodal cubic $C = \bP^1 / (0 \sim \infty)$ obtained by gluing
  the split torus $\bG_m^r \times \bP^1$ via a non-trivial involution of the
  fibers over $0$ and $\infty$, 
  \item a $\bG_m$-gerbe over a normal separated surface over $\bC$ corresponding
   to a non-torsion element of $\H^2(X, \bG_m)$
   \cite[II.1.11.b]{grothendieck-brauer}, \cite[Ex.~3.12]{ekhv}, and
  \item a $\bZ/2$-gerbe over the non-separated union $Y \bigcup_{Y \minus 0} Y$
    of the affine cone $Y = \Spec \bC[x,y,z]/(xy-z^2)$ obtained by gluing
    trivial $\bZ/2$-gerbes over $Y$ along a non-trivial
    involution over $Y \minus 0$ \cite[Cor.~3.11]{ekhv}. 
\end{itemize}

In general, the following question appears to be open:

\begin{question} \label{ques:global-quotient}
  Is there a separated Deligne--Mumford stack that is not a global quotient stack?
\end{question}

This question is closely related to Totaro's question \cite[Ques.~1]{totaro}
whether an algebraic stack with finite 
diagonal has the resolution property, i.e., whether every coherent sheaf is the quotient of a vector
bundle.  This question is not even known for normal toric varieties or smooth, separated
algebraic spaces.  For more background and additional references, 
see \cite[\S 7.2]{alper-moduli}.

In the topological setting, the situation is much better understood.  
It essentially follows from the definition
that every \emph{orbispace}, i.e., a
topological stack with proper diagonal (e.g., an orbifold or the analytification of
a separated DM stack), is locally a quotient stack, 
and it is a recent theorem that every orbispace is even a global quotient stack
\cite[Thm.~1.1]{pardon}.  In the differential setting, 
Weinstein's conjecture \cite{weinstein_linearization}---now Zung's Theorem
\cite{zung_proper_grpds}---asserts that differential stacks are locally quotient stacks near points
whose stabilizers are proper Lie groups.

\subsection{Theorems}

A recent breakthrough provides a local structure theorem for algebraic stacks
 analogous to Zung's theorem for differential stacks. 

\begin{theorem}[Local Structure Theorem I] \label{thm:local-structure}
  Let $\cX$ be a quasi-separated algebraic stack of finite type over an algebraically closed field
  $\base$, with affine automorphism groups.  For a point $x \in \cX(\bC)$ with linearly reductive
  stabilizer $G_x$, there exists an étale morphism 
  $$(\Spec A/G_x, w) \to (\cX,x)$$ 
  inducing an isomorphism of
  automorphism groups at $w$ \cite[Thm.~1.1]{ahr}.
\end{theorem}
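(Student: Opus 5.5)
The plan follows the strategy of Alper--Hall--Rydh: deformation theory builds a formal quotient model of $\cX$ near $x$, algebraization upgrades it to a finite-type affine quotient, and Artin approximation moves from formal to étale. Write $G = G_x$ and let $\cG_x = BG \into \cX$ be the residual gerbe; it is a closed substack of an open neighborhood of $x$, since $\cX$ is quasi-separated of finite type and $\base$ is algebraically closed. Let $\cX_n$ be the $n$th infinitesimal neighborhood of $\cG_x$ in $\cX$, so $\cX_0 = BG$.

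\emph{Step 1: formal quotient presentation.} We show inductively that $\cX_n \cong \Spec A_n / G$ for compatible $G$-algebras $A_n$ with $A_0 = \base$. The obstruction to extending from $\cX_{n}$ to $\cX_{n+1}$ is a square-zero problem. Deformations and obstructions of the affine morphism $\Spec A_n \to \cX_n$, and of the torsor structure, are governed by coherent cohomology on $\cX_n$ in degrees $1$ and $2$. These groups vanish because $\cX_n$ has linearly reductive stabilizer, so $\Gamma$ is exact on quasi-coherent sheaves. Thus the $\cX_n$ form a compatible system of quotients $\Spec A_n/G$. This yields a complete $G$-algebra $\hat A = \varprojlim A_n$ whose invariants $\hat A^G$ form a complete local noetherian ring.

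\emph{Step 2: coherent completeness and effectivity.} The key point is that the stack $\hat{\cX} = \Spec \hat A/G$ is \emph{coherently complete} along $BG$. This means $\mathrm{Coh}(\hat{\cX}) \simeq \varprojlim \mathrm{Coh}(\cX_n)$, and it is proved using the Reynolds operator and the decomposition of coherent modules into isotypic components, each of which is finitely generated over $\hat A^G$. Combining this with Tannaka duality for stacks with affine stabilizers, the compatible maps $\cX_n \to \cX$ assemble into a morphism $\hat{\cX} \to \cX$. This morphism is formally étale at the point $w$, which lies over $x$.

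\emph{Step 3: algebraization via Artin approximation.} Consider the functor on $G$-equivariant affine schemes of finite type (equivalently, on algebras over a finite-type model of $\hat A^G$) sending a scheme $\Spec B$ to morphisms $\Spec B/G \to \cX$. This functor is locally of finite presentation. Artin approximation (equivariant version, applied over $\hat A^G$ through the good-moduli map) then gives a finite-type $G$-algebra $A$ and a morphism $f\colon \Spec A/G \to \cX$ agreeing with $\hat{\cX}\to\cX$ to second order at $w$. Agreement to second order makes $f$ étale at $w$ and an isomorphism on stabilizers, because it induces isomorphisms on $\cX_1$ and on $BG$. Finally, shrink $\Spec A$ to a $G$-saturated affine open neighborhood (the preimage of an open in $\Spec A^G$) on which $f$ is étale. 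The étale locus is open, and any closed $G$-invariant complement not containing the closed orbit is separated from it by invariants, which makes this shrinking possible.

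The main obstacle is Step 2, coherent completeness together with effectivity of the morphism to $\cX$. Without it, the formal quotient is only a compatible system of thickenings, and Artin approximation has nothing to approximate. I would prove coherent completeness first when $G$ acts on a complete local ring with fixed point, by reducing to the invariants via isotypic decompositions. I would then invoke Tannakian results, due to Hall--Rydh, to obtain the morphism.
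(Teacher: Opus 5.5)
Your outline follows the route of the proof the paper cites (Alper--Hall--Rydh). Steps~1 and~2 and the final shrinking argument are essentially theirs. The genuine gap is in Step~3, at the sentence ``agreement to second order makes $f$ étale at $w$ \ldots\ because it induces isomorphisms on $\cX_1$ and on $BG$.'' That implication is false. The closed immersion of the cusp $\{y^2=x^3\}\into\bA^2$ is an isomorphism on first-order neighborhoods of the origin, yet it is not étale. More pointedly, take $G$ trivial, $\cX=\bA^1$, $x=0$, so $\hat\cX=\Spec\base[[t]]$. The pair $A=\base[t]/(t^{N+1})$ with $f$ the closed immersion agrees with $\hat\cX\to\cX$ to order $N$. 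It is a perfectly legitimate output of Artin approximation for your functor, because approximation only controls the approximating object modulo $\mathfrak m^{N+1}$. For étaleness at $w$ you need $f$ to induce isomorphisms $\cW_n\to\cX_n$ on \emph{all} infinitesimal neighborhoods of $BG$. Equivalently, the completion $\widehat{\cW}$ of $\Spec A/G$ at $w$ must map isomorphically onto $\hat\cX$ over $\cX$.

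Supplying this is the real content of the algebraization step, which your sketch skips. The cited proof handles it with an equivariant Artin algebraization built on the Conrad--de Jong refinement of Artin approximation. Besides agreement to order $N$, that refinement produces an isomorphism between the associated graded algebras of $\Spec A/G$ and of $\hat\cX$ along $BG$. The argument then runs as follows.
\begin{enumerate}
\item Since $\hat\cX\to\cX$ is an isomorphism on every thickening, the maps $\widehat{\cW}_n\to\cX_n\cong\hat\cX_n$ glue to a morphism $\widehat{\cW}\to\hat\cX$ over $\cX$. The gluing uses coherent completeness of $\widehat{\cW}$ and Tannaka duality.
\item This morphism is a closed immersion, because it is an isomorphism to first order.
\item Equality of the Hilbert--Samuel functions along $BG$, which comes from the graded isomorphism, forces it to be an isomorphism.
\end{enumerate}
Only then is $f$ étale near $w$, after which your shrinking argument applies.

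Three related points need fixing as well. First, you pose your functor over ``a finite-type model of $\hat A^G$'', but no such model exists a priori: $\hat A^G$ is only known to be a quotient of a power series ring. This is exactly why algebraization, and not bare approximation, is required. Second, you need $\hat A$ to be finitely generated over $\hat A^G$. This holds because $\hat A$ is a quotient of $\Sym(T_x^\vee)\otimes_{\Sym(T_x^\vee)^G}\widehat{\Sym(T_x^\vee)^G}$, where $T_x$ is the tangent space, but you should say so. Third, the limit $\varprojlim A_n$ in Step~1 must be taken isotypic component by isotypic component, not in rings.
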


The upshot is that quotient stacks $\Spec A/G$ by a reductive group $G$ can be viewed as the basic
building block of algebraic stacks near points with reductive stabilizer, just as affine schemes are
the building blocks of schemes and algebraic spaces. See  \cite[\S 7.7]{alper-moduli} for further motivation and a self-contained proof.

There are several variants of the local
structure theorem that hold under more general hypotheses and with different conditions on the
cover.  

\begin{theorem}[Local Structure Theorem II]  \label{thm:local-structure2}
	Let $\cX$ be a quasi-separated algebraic stack, locally 
	of finite type over a noetherian scheme, with affine automorphism groups.  If $x \in \cX$ is a
	closed point with linearly reductive stabilizer, then there exists an étale morphism 
	$(\Spec A/\GL_n, w) \to (\cX,x)$
	inducing an isomorphism of automorphism groups at $w$ \cite[Thm 1.1]{ahr2}.
\end{theorem}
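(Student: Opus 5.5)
The plan follows the strategy of Alper--Hall--Rydh: reduce to the case where $\cX$ is locally of finite presentation over an excellent (or complete) local base, build a formal local model, and algebraize it using Artin approximation.

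\emph{Step one: the formal model.} Let $x \in \cX$ be a closed point with residue field $\kappa$ and linearly reductive stabilizer $G_x$. Write $\cG_x = BG_x \hookrightarrow \cX$ for the residual gerbe, and $\cX_x^{[n]}$ for its $n$-th infinitesimal neighborhoods. Since $G_x$ is linearly reductive, $\cG_x$ is cohomologically affine. Hence deformation theory lets us lift $\cG_x$ compatibly to stacks of the form $\Spec A_n / G$.

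Here we want $G$ to be $\GL_n$ rather than $G_x$. The reason is that over a general noetherian base, $G_x$ need not lift to a group scheme over the base. Instead we choose a faithful representation $G_x \into \GL_{n,\kappa}$ and write $\cG_x = (\GL_n/G_x)/\GL_n$. The quotient $\GL_n/G_x$ is affine because $G_x$ is linearly reductive (Matsushima). The deformations are unobstructed in the relevant sense because $\GL_n$ is smooth over $\bZ$ and the obstruction groups vanish by cohomological affineness.

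We then use coherent Tannaka duality (Hall--Rydh) together with Grothendieck existence for stacks with a linearly reductive point (the ``coherent completeness'' of $\hat{\cX}_x$) to produce a complete local model $\hat{\cX} = \Spec \hat A/\GL_n$ with a map $\hat{\cX} \to \cX$. This map is formally versal at $x$, and it induces the given isomorphisms on all $\cX_x^{[n]}$.

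\emph{Step two: algebraization.} The map $\hat{\cX} \to \cX$ corresponds to an object of the limit-preserving functor
\[ \Hom(\Spec B/\GL_n, \cX) \]
on $\GL_n$-equivariant finite type algebras $B$. Applying equivariant Artin approximation, we obtain an affine finite type $\GL_n$-scheme $\Spec A$, a point $w$, and a morphism $f\colon \Spec A/\GL_n \to \cX$. This morphism agrees with the formal model to high enough order at $w$.

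This step uses Artin algebraization in the form for stacks: formal versality plus agreement to first order implies étaleness. To apply it we first reduce to an excellent base. We do this by standard noetherian approximation, using that $\cX$ is locally of finite type over a noetherian scheme, and by passing to the henselization at the image of $x$.

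\emph{Step three: properties of $f$.} Agreement to first order, together with formal smoothness of $\hat{\cX} \to \cX$, gives that $f$ is étale in a neighborhood of $w$. We then check that $f$ induces an isomorphism of stabilizers at $w$; this holds since it does so on the residual gerbes. Finally, we shrink to a $\GL_n$-saturated affine open neighborhood of $w$, using the good moduli space $\Spec A^{\GL_n}$. This keeps the source of the form $\Spec A/\GL_n$.

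The main obstacle is step one in mixed or non-algebraically-closed settings. Linear reductivity of $G_x$ over $\kappa$ must be made to propagate to the deformations $\Spec A_n/\GL_n$, and the coherent completeness (Tannakian) statement must be established over a complete local noetherian base. I would first prove coherent completeness for $\Spec \hat A/\GL_n$ relative to the closed orbit, using the vanishing of higher cohomology on cohomologically affine stacks. I would then deduce the Tannakian reconstruction of morphisms from it.
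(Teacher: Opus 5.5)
The paper gives no argument of its own; it only cites \cite[Thm.~1.1]{ahr2}. Your architecture matches the one in \cite{ahr} and \cite{ahr2}: realize the thickenings $\cX_x^{[n]}$ as $\GL_n$-quotients by deformation theory, build a complete model mapping to $\cX$ via coherent completeness and Tannaka duality, then algebraize by equivariant Artin approximation and shrink. The trouble is that the step you postpone is exactly what separates Theorem~II from Theorem~I, and your plan for it is circular.

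Cohomological affineness of each $\Spec A_n/\GL_n$ is automatic, since each is a nilpotent thickening of the cohomologically affine residual gerbe. What the argument actually needs is a complete model $\hat{\cX}=\Spec \hat A/\GL_n$ that exists, is cohomologically affine, and is coherently complete. When the residue characteristic is $p>0$, $\GL_n$ is not linearly reductive over the complete local base, and none of this follows from the truncations:
\begin{enumerate}[(i)]
\item $\lim A_n$ is not a $\GL_n$-algebra, and there is no ready-made cohomologically affine, coherently complete ambient in which to cut out $\hat{\cX}$ (such as the completion at the origin of a linear representation).
\item The natural proof that $\hat{\cX}$ is cohomologically affine is to split extensions compatibly on the $\cX_x^{[n]}$ and pass to the limit, and that step uses coherent completeness.
\item Your proposed proof of coherent completeness, ``using the vanishing of higher cohomology on cohomologically affine stacks,'' presupposes cohomological affineness.
\end{enumerate}
So you need an argument that does not presuppose what it proves. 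One way to supply it is to use the structure of linearly reductive groups in characteristic $p$: they are extensions of tame finite étale groups by groups of multiplicative type, so they deform and stay linearly reductive over the thickenings and the completion. You would build the formal model with such a group and induce to $\GL_n$ afterwards. In residue characteristic $0$ the local base contains $\bQ$, $\GL_n$ is linearly reductive there, and your uniform $\GL_n$ argument is fine.

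There are also some smaller points.
\begin{itemize}
\item When $\kappa(x)$ is not algebraically closed, the residual gerbe need not be neutral; a nontrivial $\mu_2$-gerbe over $\Spec \bR$ is an example. Then there is no group $G_x$ over $\kappa$ and no faithful representation $G_x \into \GL_{n,\kappa}$. What you need is that the gerbe has the form $\Spec B_0/\GL_n$. You can get this by neutralizing it over a finite extension $\kappa'/\kappa$ and pushing a faithful vector bundle forward along the finite flat map $\cG_x\times_\kappa \kappa' \to \cG_x$.
\item Agreement to first order does not by itself give étaleness. The closed immersion $\Spec \base[t]/(t^2)\to \bA^1$ agrees with $\Spec \base[[t]] \to \bA^1$ to first order but is not étale. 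You need approximation beyond an Artin--Rees constant to get $\hat{\cW}_w \cong \hat{\cX}$; only then does first-order agreement force an isomorphism of completions.
\item In positive characteristic, $\Spec A^{\GL_n}$ is in general only an adequate moduli space of $\Spec A/\GL_n$. This still suffices for your saturation step.
\end{itemize}
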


\begin{theorem}[Local Structure Theorem III]\label{thm:local-structure3}
  Let $S$ be an excellent algebraic space and let $\cX$ be an algebraic
  stack, quasi-separated and locally of finite presentation over $S$ with
  affine automorphism groups.  Consider a diagram
  \[\xymatrix{
    \cW_0 \ar@{^(-->}[r] \ar[d]^{f_0}  & \cW \ar@{-->}[d]^f \\
    \cX_0 \ar@{^(->}[r]  & \cX   
  }\]
  where $\cX_0\into \cX$ is a closed immersion, $f_0\colon \cW_0\to \cX_0$ is a morphism of
  algebraic stacks, and $\cW_0$ is isomorphic to a quotient stack $\Spec A_0 / \GL_n$ that is
  cohomologically affine.\footnote{This is automatic in characteristic $0$.}
  If $f_0$ is smooth (resp., étale, syntomic\footnote{For the syntomic result, one needs  to further
  assume that $\cX_0$ has the resolution property along with mild hypotheses on the characteristics,
  e.g., every closed point of $\cW_0$ has positive characteristic.}), then there exists a smooth
  (resp.\ étale, syntomic) morphism $f\colon \cW\to \cX$ such that $\cW \cong \Spec A/\GL_m$  and
  $f|_{\cX_0}\simeq f_0$ \cite[Thm.~1.3]{ahlhr}.
\end{theorem}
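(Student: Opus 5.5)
Following \cite{ahlhr}, the plan is to build $\cW$ as a compatible system of infinitesimal thickenings of $\cW_0$, algebraize that system, and then spread it out to an honest neighborhood. Write $\cX_n$ for the $n$-th infinitesimal neighborhood of $\cX_0$ in $\cX$. First reduce to $\cX$ quasi-compact, hence of finite presentation over an affine excellent base, by replacing $\cX$ with a quasi-compact open containing the image of $f_0$.

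The first step is to deform $f_0$ order by order. Suppose $f_n\colon \cW_n\to\cX_n$ has been constructed, smooth (resp.\ étale, syntomic) with $\cW_n$ a thickening of $\cW_0$. The obstruction to extending it to a flat $f_{n+1}\colon\cW_{n+1}\to\cX_{n+1}$ lies in an $\mathrm{Ext}^2$ group on $\cW_0$, namely $\mathrm{Ext}^2(L_{\cW_0/\cX_0}, f_0^*\mathcal I_n/\mathcal I_{n+1})$. In the smooth and étale cases the cotangent complex is a vector bundle in degree $0$. Because $\cW_0\cong\Spec A_0/\GL_n$ is cohomologically affine, all higher coherent cohomology on $\cW_0$ vanishes, so this Ext group is zero. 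In the syntomic case $L$ is perfect of amplitude $[-1,0]$, and one would use the resolution property of $\cX_0$ to represent it by a two-term complex of vector bundles pulled back from $\cX_0$. The characteristic hypotheses would then be used to kill the remaining cohomology. Each thickening $\cW_n$ is a stack with affine good moduli space whose reduction is $\Spec A_0/\GL_n$. Since $\GL_n$-torsors deform uniquely over cohomologically affine bases, we get $\cW_n\cong\Spec A_n/\GL_n$ with compatible $A_n$.

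The second step is algebraization. The system $\{\cW_n\}$ is a formal stack with a compatible family of morphisms to $\cX$. Let $\hat A=\varprojlim A_n$, which is a $\GL_n$-equivariant complete algebra. Coherent completeness of $\Spec\hat A/\GL_n$ along $\Spec A_0/\GL_n$, i.e.\ Tannakian formal GAGA for cohomologically affine quotient stacks, lets us algebraize the system to $\hat\cW=\Spec\hat A/\GL_n$. The morphisms $\cW_n\to\cX$ then assemble to $\hat f\colon\hat\cW\to\cX$, using the Tannaka duality results for stacks with affine stabilizers (this is where quasi-separatedness and affine stabilizers enter). The resulting $\hat f$ is formally smooth (resp.\ étale, syntomic) along $\cW_0$, since it is flat and its fibers agree with those of the $f_n$.

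The third step is to pass from the complete object to a finite type one by Artin approximation. We seek $\cW=\Spec A/\GL_m$ of finite type with a morphism to $\cX$ that agrees with $\hat f$ to first order along $\cW_0$. Here we use the excellence of $S$ and equivariant Artin approximation, applied to the functor of $\GL$-equivariant maps over the good moduli space $\Spec\hat A^{\GL_n}$. The second-order agreement is then upgraded to agreement to all orders along $\cW_0$ by an equivariant version of the Artin--Rees refinement (the "approximation of the first-order structure" trick). Finally, openness of the smooth (resp.\ étale, syntomic) locus, together with its saturation with respect to the good moduli space, lets us shrink $\cW$ so that $f$ has the required property everywhere and $f|_{\cX_0}\simeq f_0$. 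We may have to enlarge the group to $\GL_m$ when embedding into a finite type model. I expect the algebraization and equivariant approximation step to be the main obstacle: both coherent completeness and equivariant Artin approximation demand substantial foundational input. Within the syntomic case, the hardest point should be the vanishing of obstructions in positive characteristic.
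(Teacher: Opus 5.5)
The paper gives no argument beyond citing \cite{ahlhr}. For the smooth and \'etale cases your outline matches that source's strategy: deform order by order, effectivize via coherent completeness and Tannaka duality, algebraize equivariantly over the good moduli space, then shrink. There are two inaccuracies in that part.

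\begin{itemize}
\item For a non-representable smooth $f_0$, the cotangent complex is not a vector bundle in degree $0$. It is perfect of Tor-amplitude $[0,1]$, e.g.\ $L_{B\GL_n/\base}=\mathfrak{gl}_n^\vee[-1]$. The relevant $\mathrm{Ext}^2$ still vanishes, so this is harmless.
\item Artin--Rees alone does not upgrade a finite-order approximation to agreement to all orders. In Artin's algebraization, the comparison map between the completion of the approximation and $\hat\cW$ comes from formal versality of $\hat f$ (here, formal smoothness). Artin--Rees is only used to show that this map is an isomorphism. You should name that input, because it is exactly what separates the cases of the theorem.
\end{itemize}

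The genuine gap is the syntomic case. You use the resolution property and the characteristic hypotheses to kill the deformation obstruction, but that obstruction vanishes with no extra hypotheses:
\begin{itemize}
\item $L_{\cW_0/\cX_0}$ is perfect of Tor-amplitude in $[-1,1]$.
\item Hence $R\mathcal{H}om(L_{\cW_0/\cX_0},f_0^*J)$ has quasi-coherent cohomology only in degrees $\le 1$.
\item Higher cohomology of quasi-coherent sheaves on the cohomologically affine $\cW_0$ vanishes, so $\mathrm{Ext}^2=0$.
\end{itemize}
The extra hypotheses therefore do no work in your argument. The step that actually fails is algebraization. A syntomic $\hat f$ is not formally versal, so nothing upgrades a finite-order approximation to one with the correct completion. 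In particular, flatness of the approximating $f$ along $\cW_0$ is not justified.

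What is needed is a reduction to the smooth case, and this is where the extra hypotheses enter:
\begin{enumerate}
\item Use the resolution property of $\cX_0$ to embed $\cW_0$, which is affine over $\cX_0\times B\GL_n$, as a closed substack of a vector bundle $\mathbb{V}(\mathcal{E})$ over $\cX_0\times B\GL_n$. Then $\cW_0$ sits Koszul-regularly in a stack smooth over $\cX_0$.
\item Use the \'etale case, together with the characteristic hypothesis, to replace this ambient stack by a linearly fundamental $\cV_0$, smooth over $\cX_0$, still containing $\cW_0$.
\item Apply the smooth case to get a smooth $\cV\to\cX$ with $\cV\times_{\cX}\cX_0\simeq\cV_0$.
\item Lift the regular equations of $\cW_0$ in $\cV_0$ to $\cV$. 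The local criterion of flatness then makes the resulting $\cW\to\cX$ flat, hence syntomic, along $\cW_0$.
\end{enumerate}
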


\subsection{Applications}
These theorems have had many applications in algebraic geometry, most notably in equivariant
geometry.  For instance,  it has been applied to generalize Sumihiro's theorem for torus actions on
normal schemes to arbitrary schemes and algebraic spaces, where instead of equivariant Zariski open
affine neighborhoods, one obtains equivariant étale affine neighborhoods.  It has also been applied
to prove a general existence result for pushouts for algebraic stacks. The original motivation 
and one of the more striking applications is the Existence Theorem 
(\Cref{thm:existence}) for good moduli spaces.

\section{Beyond Geometric Invariant Theory} \label{sec:beyond-git}

We will introduce and motivate the beyond GIT framework using 
the running example of the moduli stack $\cBun := \cBun(C)$ of vector bundles over a smooth
projective curve $C$ defined over an algebraically closed field $\base$ of characteristic $0$.

\subsection{Semistable vector bundles over a curve}
The stack $\cBun$ assigns any $\base$-scheme $T$ to the groupoid of vector bundles on $C_T := C
\times T$. 
Even after fixing numerical invariants, the moduli stack $\cBun_{r,d} \subseteq \cBun$ of  
vector bundles on $C$ of rank $r$ and degree $d$ is too big to admit a  moduli space: it is both 
unbounded (i.e., not quasi-compact) and highly non-separated.   As suggested by Mumford and
Seshadri, it is natural to restrict to the open substack $\cBun_{r,d}^{\ss} \subseteq \cBun_{r,d}$
of semistable vector bundles.
A vector bundle $F$ on $C$ is  \emph{semistable} if every nonzero subsheaf $E
\subseteq F$ satisfies
\[
  \mu(E):=\frac{\deg E}{\rk E} \le \frac{\deg F}{\rk F} =: \mu(F),  
\] 
where $\mu(F)$ is called the \emph{slope} of $F$.  A remarkable feature of semistability  is that
every vector bundle $F$ has a unique filtration
$$F_p \subsetneq \cdots \subsetneq F_0 = F$$ 
whose associated graded bundles $F_i/F_{i+1}$ are semistable with monotone increasing slope.  
This is called the \emph{Harder--Narasimhan filtration} or simply the \emph{HN filtration}.
Moreover, the substack of bundles whose HN filtration has prescribed ranks and degrees is locally
closed in $\cBun_{r,d}$, and retracts onto the products of stacks of semistable objects
corresponding to the associated graded pieces of the HN filtration.  This gives a stratification of
$\cBun_{r,d}$ called the \emph{HN stratification}. For further details on semistability, see
\cite{le-potier}, \cite{huybrechts-lehn}, or \cite[\S 9]{alper-moduli}. 

\subsection{Geometric invariant theory}
By synthesizing Grothendieck's formalism of scheme theory with 19th century invariant theory,
Mumford developed a theory of quotients in algebraic geometry known as \emph{Geometric Invariant
Theory} (\emph{GIT}), and applied this theory to construct moduli spaces \cite{git}.  The approach
of GIT proceeds along the following three steps:
\begin{enumerate}[(1)]
    \item
      Express the moduli problem $\cM$ as a quotient $U/G$ of the
      action of a quasi-projective variety $U$ by a reductive group
      $G$ and choose a $G$-equivariant compactification $\bar{U}
      \subseteq \CP^n$, where $G$ acts linearly on $\CP^n$.
    \item
      Show that a point $u \in \bar{U}$ is in $U$ if and only if
      it is GIT semistable, i.e., there exists a non-constant
      $G$-invariant homogeneous $f \in \Gamma(\bar{U},
      \oh(d))^G$ with $f(u) \neq 0$. 
    \item
      Realize the moduli space of $\cM$ as the projective
      variety $U \gitq G := \bigoplus_{d \ge 0}
      \Gamma(\bar{U}, \oh(d))^G$.
\end{enumerate}

To apply this approach to $\cM = \cBun^{\ss}_{r,d}$, one utilizes Grothendieck's projective Quot scheme
\[
\bar{U}_n:=\Quot^P(\oh_C^{\oplus P(n)}(-n) / C)
\]
parameterizing coherent quotients $[\pi \co \oh_C^{\oplus
P(n)}(-n) \onto F]$, where $P(n) = r n \deg(\oh_C(1)) + (1-g) r + d$ is the Hilbert polynomial. For $n \gg 0$, the GIT semistable locus $U_n \subseteq \bar{U}_n$ corresponds to quotients such that $F$ is a semistable vector bundle and the induced map $\H^0(\pi(n)) \co \base^{\oplus P(n)} \to \H^0(C, F(n))$ is an isomorphism, and every semistable $F$ appears in $U_n$. This description of the GIT semistable locus, first established by Seshadri \cite{seshadri-space-of-unitary-vector-bundles}, is the hardest part of the GIT approach, and has since been
revisited and generalized many times. (See \cite[\S 9.5]{alper-moduli} for a discussion).
As a consequence, we obtain a projective good moduli space
$M^{\ps}_{r,d} := U_n \gitq \GL_{P(n)}$ for $\cM \cong U_n/\GL_{P(n)}$ whose $\base$-points are in bijective correspondence with
polystable vector bundles.

Another consequence of the general GIT machinery is that the unstable locus $\bar{U} \setminus U$ admits a stratification, referred to as the 
\emph{HKKN stratification} after Hesselink, Kempf, Kirwan, and Ness \cite{kempf1978instability, MR553706}; see \cite[\S 8.7]{alper-moduli}
for a recent exposition and further references. While the strata of $\bar{U} \setminus U$ typically do not admit clean modular interpretations, in the case of $\cBun_{r,d}$ (and sheaves on higher dimensional projective schemes) the GIT stratifications of $\bar{U}_n \setminus U_n$ essentially stabilize as $n \to \infty$ to the Harder--Narasimhan stratification of
$\cBun_{r,d}$ \cite{hoskins-hkkn-vs-hn}.

\subsection{Reinterpreting semistability: $\Theta$-semistability}
We will recast the notion of semistability of vector bundles on $C$ in terms of $\bZ$-weighted
filtrations, which in turn will allow us to generalize the notion of semistability to arbitrary
stacks.  By a \emph{$\bZ$-weighted filtration} $E_{\bullet}$, we mean a filtered vector bundle 
$E_p \subsetneq E_{p-1} \subsetneq \cdots \subsetneq E_0 = E$ and a choice of integers 
$w_0 < \cdots < w_p$. The following lemma turns out to be key to interpreting the semistability
condition \emph{geometrically}.

\begin{lemma} \label{lem:semistability-weighted-filtration}
  A vector bundle $E$ is semistable if and only if 
  $$\sum_{i = 0}^p w_i \big(\deg(E) \rk(E_i/E_{i+1}) -  \rk(E) \deg(E_i/E_{i+1}) \big) \ge 0$$
  for every $\bZ$-weighted filtration $E_{\bullet}$ of $E$. 
\end{lemma}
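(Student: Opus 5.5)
The plan is an Abel summation that rewrites the weighted sum as a nonnegative combination of the single-subbundle terms $\rk(E)\bigl(\mu(E)-\mu(E_i)\bigr)\rk(E_i)$.

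First, set up notation. For a subsheaf $F \subseteq E$ write
$$\phi(F) := \deg(E)\rk(F) - \rk(E)\deg(F) = \rk(E)\rk(F)\bigl(\mu(E) - \mu(F)\bigr).$$
This is additive in short exact sequences. Hence the expression in the lemma equals $\sum_{i=0}^p w_i\bigl(\phi(E_i) - \phi(E_{i+1})\bigr)$, with the convention $E_{p+1} = 0$. Summation by parts turns it into
$$w_0\,\phi(E_0) + \sum_{i=1}^{p} (w_i - w_{i-1})\,\phi(E_i).$$
Since $E_0 = E$, we have $\phi(E_0) = 0$, so the expression equals $\sum_{i=1}^p (w_i - w_{i-1})\,\phi(E_i)$. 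Each coefficient $w_i - w_{i-1}$ is strictly positive.

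Second, the forward direction. Suppose $E$ is semistable. Each $E_i$ is a nonzero subsheaf, so $\mu(E_i)\le\mu(E)$, and therefore $\phi(E_i)\ge 0$. The whole sum is then a nonnegative combination of nonnegative terms, hence $\ge 0$.

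Third, the converse. Suppose $E$ is not semistable, so some nonzero subsheaf $F$ has $\mu(F) > \mu(E)$.
\begin{itemize}
\item Replace $F$ by its saturation in $E$. This only increases the degree and keeps the rank, so we may assume $F$ is a subbundle with torsion-free, hence locally free, quotient on the curve $C$.
\item Then $F \neq E$, since $\mu(F) > \mu(E)$. So $F \subsetneq E$ is a two-step filtration ($p=1$).
\item Take weights $w_0 = 0 < w_1 = 1$. By the formula above, the sum equals $\phi(F) = \rk(E)\rk(F)\bigl(\mu(E)-\mu(F)\bigr) < 0$.
\end{itemize}
This contradicts the hypothesis, so $E$ is semistable.

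There is no real obstacle. The only points needing care are the bookkeeping in the summation by parts, where the term for $E_0$ vanishes, and the saturation step. Saturation ensures the destabilizing subsheaf gives a genuine filtration by subbundles, as required in the definition of a $\bZ$-weighted filtration.
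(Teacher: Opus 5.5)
Your proof is correct and takes essentially the same route as the paper. The telescoping computation right after the lemma rewrites the sum, up to a factor of $2$, as $\sum_{i=1}^p (w_i - w_{i-1})\rk(E)\rk(E_i)\bigl(\mu(E)-\mu(E_i)\bigr)$, which is exactly your summation by parts; you have also spelled out the converse, using a saturated destabilizing subbundle with weights $0<1$, which the paper leaves implicit.
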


The expression in the lemma does not appear out of thin air: it occurs as the weight of a line
bundle on $\cBun$ restricted to the associated graded bundle of the $\bZ$-weighted filtration.
Namely, from a $\bZ$-weighted filtration, one obtains a sheaf 
$\sum t^{-w_i} \mathcal{O}_C[t] \cdot E_i \subseteq \oh_{C}[t^{\pm 1}] \otimes E$ 
of graded $\oh_{C}[t]$-modules, where $t$ has weight $-1$.  Fixing $\Theta$ as
the quotient 
$$\Theta := \bA^1/\bG_m,$$ 
$\sum t^{-w_i} E_i$ corresponds to a vector bundle on the stack $C \times \Theta$, or
equivalently to a morphism $f:\Theta \to \cBun$; the map $f:\Theta \to \cBun$
takes $1$ to $E$ and $0$ to the associated graded bundle $\bigoplus_{i=0}^p E_i/E_{i+1}$.
For any line bundle $\cL$ on $\cBun$, we 
define $\wt(f^\ast \cL)$ as the \emph{opposite} of the unique integer $w$ such that 
$f^\ast \cL \cong \oh_{\Theta}\langle w \rangle$.  
We will use the line bundle 
\[
  \cL_{\rm{adj}} := \det(R p_{2,\ast}(E_{\univ} \otimes E_{\univ}^\vee))^\vee
\] 
where $E_{\univ}$ is the universal bundle on  $C \times \cBun$.  Letting $G_i = E_i/E_{i+1}$ be
the graded factors, we can easily compute the weight: 
\begin{align*}
    \wt(f^\ast \cL_{\rm adj}) 
      &= - \wt(\det(H^\ast(C,\bigoplus_{i,j=0}^p G_i \otimes G_j^\vee))) \\
      &= -\sum_{i,j=0}^p (w_i - w_j) \cdot \chi(C, G_i \otimes G_j^\vee) \\
      &= -\sum_{i,j=0}^p (w_i - w_j)  \big(\deg(G_i)\rk(G_j) - \deg(G_j)\rk(G_i)\big) 
      & \text{(Riemann--Roch)}\\
      &= -2 \sum_{i=0}^p w_i  (\rk(E)\deg(G_i) - \deg(E) \rk(G_i)) \\
      &=-2 \sum_{i=1}^p (w_i - w_{i-1})(\rk(E) \deg(E_i) - \deg(E) \rk(E_i)) 
      &  \text{(telescoping sum)}\\
      & = 2 \sum_{i = 1}^p (w_i - w_{i-1}) \rk(E) \rk(E_i) \big(\mu(E) - \mu(E_i) \big)
\end{align*}
The expression in \Cref{lem:semistability-weighted-filtration} is one half of the fourth expression.
The final expression makes it transparent that $E$ is semistable if and only if $\wt(f^\ast \cL_{\rm
adj}) \geq 0$ for all maps $f \co \Theta \to \cBun_{r,d}$ with $f(1) = [E]$.  Except for the factor
of 2, the same calculation holds for the line bundle $\cL_V := \det(R p_{2,\ast}(E_{\univ} \otimes
V))^\vee$, where $V$ is a vector bundle on $C$ of rank $r$ and degree $r(g-1) - d$.

Semistability can be formulated in this way for any stack equipped with a line bundle \cite{halpernleistner2022structureinstabilitymodulitheory, heinloth-stability}:

\begin{definition}\label{D:theta_stability}
    A point $x \in |\cX|$ is \emph{semistable with respect to $\cL$} if $\wt(f^\ast \cL) \geq 0$ for
    all morphisms $ f \co \Theta_K \to \cX$ (where $K$ is a field extension of $\base$) with $f(1)
    \cong x$.  We denote by $\cX^{\cL-\ss}$ the substack of points semistable with respect to
    $\cL$.
\end{definition}

\begin{remark}
    The sign conventions for the weight of a line bundle on $\Theta$ can be confusing.  Our 
    guiding convention is that a map $f: \Theta_K \to \cX$ is destabilizing if 
    $\Gamma(\Theta_K,f^\ast(\cL)) = 0$.
\end{remark}

The above calculations show that $\cBun^{\ss} = \cBun^{\cL_{\rm adj}-\ss}$. Also note that when
$\cL= \oh_{\cX}$ is trivial, then $\cX^{\ss} = \cX$. The above definition
will be subsumed by an even more general notion of a numerical invariant on $\cX$ below, but this
suffices for the construction of moduli spaces.

\begin{example}[GIT]
  Consider a quotient stack $\cX = X/G$ with $G$ reductive and a $G$-linearized line bundle $\cL$ on $X$.  The definition of \emph{GIT semistability}  \cite[Def.~1.7]{git} for a point $x \in X(\base)$ requires that there be a section $s \in \H^0(X, \cL^{\otimes n})^G$ for some $n > 0$ such that $s(x) \neq 0$ and $X_s$ is affine.  When $X$ is projective and $\cL$ is ample, the affineness of $X_s$ is
  automatic.  Moreover, in this case, the Hilbert--Mumford Criterion \cite[Thm.~2.1]{git} asserts
  that $\cX^{\cL-\ss} = X^{\ss}/G$.   
  
  One can also define an analogous notion of GIT semistability for a polarized algebraic stack
  $(\cX, \cL)$   \cite[Defn.~11.1]{alper-good} by requiring the existence of a section
  $s \in \H^0(\cX, \cL^{\otimes n})$ for some $n > 0$ such that $s(x) \neq 0$ with $\cX_s$
  cohomologically affine.  With this definition, the semistable locus is necessarily open and admits
  a quasi-projective good moduli space, but it appears difficult in practice to determine the semistable locus, even its non-emptiness. 
    
  If $X$ is not projective or $\cL$ is not ample, then $\cX^{\cL-\ss}$ need not be open
  in $\cX$, e.g., $\cX = (\bA^2 \setminus 0) / \bG_m$ with weights $0$, $1$ and $\cL =
  \oh_{\cX}\langle d \rangle$ with $d < 0$ in which case $\cX^{\cL-\ss}$ is the $y$-axis, or $\cX =
  {\rm Bl}_0 \bP^2 / \bG_m$ where $\cL$ is the pullback of $\oh(1)$ from $\bP^2/\bG_m$ in which case
  $\cX^{\cL-\ss}$ is the exceptional divisor. 
\end{example}

\begin{example}[K-semistability]\label{E:K-stability}
  Let $\Fanos_{n,V}$ be the algebraic stack parameterizing families $X \to S$ of $\bQ$-Fano klt
  varieties of dimension $n$ and volume $V$  satisfying Kollár's condition. 
  If $\cL_{\rm CM}$ is the CM line bundle, then the $\cL_{\rm CM}$-semistable locus
  $\Fanos_{n,V}^{\cL_{\rm CM}-\ss}$ is precisely the locus  $\Fanos_{n,V}^{K-\ss}$ of $K$-semistable
  $\bQ$-Fano varieties. As the culmination of several major breakthroughs in K-stability over the
  past decade, we now know that $\Fanos_{n,V}$ admits a $\Theta$-stratification (see \S \ref{subsec:theta-stratifications}) and that the 
  semistable locus admits a projective good moduli space $\Fanos_{n,V}^{K-\ss}
  \to M^{\rm ps}_{n,V}$ parameterizing $K$-polystable $\bQ$-Fano varieties. See \cite{xu-book}. 
\end{example}

\subsection{Good moduli spaces}

If $G$ is a (not necessarily connected) reductive group acting on an affine variety $X = \Spec A$,
then we define the GIT quotient as $X /\!/ G := \Spec A^G$. If every stabilizer is finite, then
$X/\!/G$ is a geometric quotient, i.e., the morphism of stacks $X/G \to X/\!/G$ is universally
bijective.  In general, $X/\!/G$ may identify distinct orbits, e.g., $\bA^1/\bG_m \to \Spec \base$,
but nevertheless $X/\!/G$ is the space that best approximates the stack $X/G$. This motivates the
following a-historical
definition:

\begin{definition}\label{D:gms}
    A morphism $q \co \cX \to X$, from an algebraic stack $\cX$ of finite type over an algebraically
    closed field $\base$ of characteristic $0$ with affine diagonal to an algebraic space $X$ over
    $\base$, is a \emph{good moduli space} if there is an \'{e}tale cover of $X$ of the form
\[
  \xymatrix{
    \Spec A/G \ar[d] \ar[r]^(.65){\text{étale}}
      & \cX \ar[d]^q \\
    \Spec A^G \ar[r]^(.6){\text{\'etale}}
      & X, \cartesian
  }
\]
where $G$ is a reductive group and the diagram is cartesian.\footnote{Since $\base$ has 
characteristic $0$, every reductive group is  linearly reductive.  In positive or mixed characteristic, this definition provides an example of an
\emph{adequate moduli space}, but precisely because Local Structure Theorem (\Cref{thm:local-structure2}) is unknown in this
setting (see \Cref{conj:local-structure-charp}), it is unclear if this is equivalent to the original
definition of an adequate moduli space given in \cite{alper-adequate} (even under the finite
typeness and affine diagonal hypotheses).}  

\end{definition}

A good moduli space morphism enjoys the following useful properties (see \cite[\S 7.5]{alper-moduli} for a recent exposition): 
\begin{enumerate}[(1)]
    \item $q_\ast \co \QCoh(\cX) \to \QCoh(X)$ is exact, and $q_\ast \oh_\cX \cong
    \oh_X$.\footnote{This was actually the original definition \cite[Defn~4.1]{alper-good}, and this condition alone implies all
    of the other properties. The Local Structure Theorem (\Cref{thm:local-structure2}) implies that (1) is equivalent to
    \Cref{D:gms}.}
    \item Any morphism to an algebraic space $\cX \to Y$ factors uniquely through $q$.
    \item $q$ is universally closed, and $q_\ast$ preserves coherent sheaves.
    \item Every fiber of $q$ has a unique closed point $x_0$. The automorphism group $\Aut(x_0)$ 
    is reductive, and for any other $\base$-point $y \in q^{-1}(q(x_0))$  there is a morphism 
    $f\co \Theta \to \cX$ with $f(1) \cong y$ and $f(0) \cong x_0$, which by Kempf's optimal
    destabilization theorem \cite{kempf1978instability}
    is canonical up to the choice of a norm on one-parameter subgroups
    of $G$.
\end{enumerate}

Since $\cX$ has affine diagonal, the fiber product $(\Spec A /G) \times_\cX (\Spec A/G)$ has the form
$\Spec B/G$ for some $G$-equivariant algebra $B$, and $X$ can be constructed as the quotient of
$\Spec A^G$ by the \'{e}tale equivalence relation 
$\Spec B^G \hookrightarrow \Spec A^G  \times \Spec A^G$. 
Therefore, the existence of a good moduli space $\cX \to X$ is equivalent to the
existence of a representable \'{e}tale cover $\Spec A /G \to \cX$ such that both induced morphisms
$\Spec B^G \to \Spec A^G $ are \'{e}tale. Clearly then, what one needs to construct good moduli
spaces are conditions that allow one to apply the Local Structure Theorem
(\Cref{thm:local-structure}), and to guarantee that the resulting cover satisfies this additional
\'{e}tale hypothesis.

Given a discrete valuation ring (DVR) $R$ with uniformizer $\pi$, we define $\Theta_R := \Spec(R[t])
/ \bG_m$ and $\ST_R := \Spec(R[s,t]/(st-\pi)) / \bG_m$, with the $\bG_m$-action determined
by giving $t$ weight $-1$ and $s$ weight $1$ in both instances. Both stacks have a unique closed
point, with maximal ideal generated by $(\pi,t)$ and $(s,t)$ respectively, which we call $0$. 

\begin{definition}
  We say that $\cX$ is \emph{$\Theta$-complete} (formerly called \emph{$\Theta$-reductive}) if for any DVR $R$ over $\base$, any
  morphism $\Theta_R \setminus 0 \to \cX$ extends uniquely to $\Theta_R$.
  Likewise, $\cX$ is \emph{$\sS$-complete} if for any DVR $R$ over $\base$, any morphism
  $\ST_R \setminus 0 \to \cX$ extends uniquely to $\ST_R$.
\end{definition}

\begin{theorem}[Existence Theorem]\label{thm:existence}
  Let $\cX$ be an algebraic stack of finite type over an algebraically closed field $\base$ 
  of characteristic $0$ with affine diagonal. Then there
  exists a good moduli space $\cX \to X$ where $X$ is a separated algebraic space if and only if
  $\cX$ is $\Theta$-complete and
  $\sS$-complete \cite[Thm.~A]{ahlh-existence} (see also \cite[\S 7.10]{alper-moduli}).
\end{theorem}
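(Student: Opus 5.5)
The plan is to prove the two directions separately; the reverse direction is the substantial one. For necessity, suppose $q\co\cX\to X$ is a good moduli space with $X$ separated. $\Theta$-completeness and $\sS$-completeness are étale-local on $X$, because both $\Theta_R$ and $\ST_R$ have good moduli space $\Spec R$ and the extension problems are compatible with étale base change on $X$. Using \Cref{D:gms}, this reduces us to $\cX=\Spec A/G$ with $G$ reductive, with the composite $\Theta_R\setminus 0\to\Spec A^G$ extending over $\Spec R$ by separatedness. In that setting the extension problem becomes $G$-equivariant algebra. A map $\Theta_R\setminus 0\to \Spec A/G$ is a $G$-torsor on $\Theta_R\setminus 0$, and we first have to extend this torsor. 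Since $\Theta_R\setminus 0$ has complement of codimension $2$ in a regular stack, torsors under reductive groups should extend; after extending, we trivialize. The map to $\Spec A$ then extends by Hartogs, because $\Gamma(\Theta_R\setminus 0,\oh)=\Gamma(\Theta_R,\oh)$. The same argument applies verbatim to $\ST_R$. Uniqueness of the extension follows from affineness of the diagonal together with density of $\Theta_R\setminus 0$.

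For sufficiency, I would use the strategy suggested just before the theorem: construct a cover $\Spec A/G\to\cX$ whose induced maps on invariants are étale. The steps are as follows.

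\textbf{(a)} $\sS$-completeness implies that every closed point $x$ has reductive stabilizer. The idea is that $\sS$-completeness of $BG_x$, which is inherited from $\cX$ via the closed embedding of the residual gerbe, forces reductivity. For instance, $B\mathbb{G}_a$ fails this, since a nontrivial extension over the punctured $\ST_R$ does not extend.

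\textbf{(b)} Apply \Cref{thm:local-structure} at each closed point $x$ to get an étale $f\co(\Spec A/G_x,w)\to(\cX,x)$ that is stabilizer-preserving at $w$.

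\textbf{(c)} Use $\Theta$-completeness to shrink $\Spec A/G_x$ to a saturated open neighborhood on which $f$ sends closed points to closed points and is stabilizer-preserving at closed points. The mechanism is that a closed point $y$ of $\Spec A/G_x$ whose image degenerates to a closed point $z$ gives a map $\Theta\to\cX$. We want to lift this map along the étale map $f$, and $\Theta$-completeness of $\cX$ should allow the lift. The locus where these properties fail is then closed, and its image in $\Spec A^{G_x}$ is closed because good quotients are closed maps. We remove that image.

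\textbf{(d)} Once $f$ is stabilizer-preserving and sends closed points to closed points, Luna's fundamental lemma applies, as in Alper's étale-local criterion. It shows that $\Spec A^{G_x}\to$ (the putative quotient) is étale, so the equivalence relation $\Spec B^G\rightrightarrows\Spec A^G$ is étale. The quotient $X$ is then an algebraic space.

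\textbf{(e)} Finally, $\sS$-completeness gives separatedness of $X$ via the valuative criterion. Two lifts of a DVR point that agree generically glue along $\ST_R\setminus 0$, and the extension to $\ST_R$ shows that their limits are S-equivalent, so they map to the same point of $X$. For this step we need uniqueness of closed points in fibers, which we also get from $\sS$-completeness.

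I expect step (c) to be the main obstacle: verifying that the set of bad points is closed and saturated, and that $\Theta$-completeness really yields the needed lifts against an étale but non-representable-to-the-space map. If that proves hard, my first attempt would be to reduce it to showing that $f$ is \emph{strongly étale} near $w$. I would do this by working in the formal completion at $x$, which is coherently complete, and using $\Theta$-completeness to compare the $\Theta$-filtrations on both sides.
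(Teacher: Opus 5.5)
Your overall route is the one behind the cited proof and the remark preceding the theorem: reductive stabilizers from $\sS$-completeness, charts from the Local Structure Theorem, charts that send closed points to closed points and are stabilizer-preserving at closed points, Luna's fundamental lemma, and separatedness from $\sS$-completeness.

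The genuine gap is step (c). There you attribute stabilizer-preservation at nearby closed points to $\Theta$-completeness. But the only mechanism you describe, lifting a degeneration $\Theta\to\cX$ along $f$, concerns closed points mapping to closed points and says nothing about automorphism groups. $\Theta$-completeness cannot supply stabilizer-preservation, as the following example shows.

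Let $\cG=\bA^1\sqcup(\bA^1\minus 0)\to\bA^1$ be the étale group scheme $(\bZ/2)_{\bA^1}$ with the non-identity element over $0$ deleted, and let $\cX=B_{\bA^1}\cG$.
\begin{itemize}
\item $\cX$ has affine diagonal and finite stabilizers.
\item $\cX$ is $\Theta$-complete. A map $\Theta_R\minus 0\to\cX$ is a map $\Spec R\to\bA^1$ together with a $\cG_R$-torsor on $\Theta_R\minus 0$. Using the cover by $\{t\neq 0\}/\bG_m\cong\Spec R$ and $\Theta_K$, that torsor is pulled back from $\Spec R$, and isomorphisms extend by Hartogs.
\item The point over $0$ has trivial stabilizer, so any chart there is an affine scheme $\Spec A$. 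Every closed point over $a\neq 0$ has stabilizer $\bZ/2$ in $\cX$.
\item Hence stabilizer-preservation fails at every closed point of the chart lying over $\bA^1\minus 0$. For the chart $\bA^1\to\cX$ given by the trivial torsor, the bad locus is $\bA^1\minus 0$, which is not closed, and no shrinking helps.
\item What fails is $\sS$-completeness. Take a DVR whose closed point maps to $0$ and whose generic point does not. Gluing the trivial torsors on the two copies of $\Spec R$ in $\ST_R\minus 0$ by the non-identity element of $\cG(K)$ gives a map that does not extend over $\ST_R$.
\end{itemize}

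So stabilizer-preservation needs a second, substantially harder use of $\sS$-completeness. You must rule out exactly the phenomenon above: a connected component of the automorphism group scheme along a DVR that is present over the generic point but disappears at the closed point. This is the step the paper calls surprisingly subtle in the discussion preceding \Cref{P:fixed_point}. Given an $R$-point $\xi$ and a finite-order automorphism $g$ of $\xi_K$, one needs, after a ramified extension $R\subseteq R'$, another integral model of $\xi_{K'}$ over which $g$ extends. In a linear chart $V/G$ this means finding $h\in G(K')$ with $hgh^{-1}\in G(R')$ and $h\cdot\xi\in V(R')$. This is then combined with $\sS$-completeness. Your outline has no substitute for this ingredient, so it does not establish the stabilizer-preservation hypothesis that Luna's fundamental lemma needs in step (d).

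A minor point in the necessity direction: the extended torsor on $\Theta_R$ generally cannot be trivialized, since its restriction to $0$ can be given by a nontrivial cocharacter. Apply Hartogs on the total space of the torsor instead.
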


\subsection{Beyond GIT: verifying $\Theta$- and $\sS$-completeness} \label{sec:beyond-git-completeness}

We sketch two ways to verify that the stack $\cBun^{\rm{ss}}_{r,d}$ of semistable bundles on $C$ is
$\Theta$- and $\sS$-complete, which by the Existence Theorem (\Cref{thm:existence}) guarantees the 
existence of a separated good moduli space.  Both approaches have been vastly generalized, and are 
still the subject of ongoing
research. They each require that $\cBun^{\ss}_{r,d}$ is open and bounded as input,
so we take these as given. We will return to the openness question when we discuss
$\Theta$-stratifications.

To make matters concrete for $\cBun$: We are given a $\bG_m$-equivariant vector bundle $E$ on
$C_{Y\setminus 0} := C \times (Y \setminus 0)$, and the question is under what conditions we can
extend $E$ to $C_Y$, where $Y$ is either $\Spec(R[t])$ or $\Spec(R[s,t]/(st-\pi))$. If such an
extension exists, it must be isomorphic to $j_\ast E$, where $j \co C_{Y\setminus 0} \into C_Y$ is
the open immersion.

\medskip
\noindent{\textbf{First approach:}}
If $\cX$ is $\Theta$- and $\sS$-complete, then the semistable locus
$\cX^{\cL-\ss}$ with respect to any line bundle $\cL$ is also $\Theta$- and $\sS$-complete 
\cite[Prop.~6.14]{ahlh-existence}. 
However, the stack $\cBun$  is neither $\Theta$- nor $\sS$-complete, so an initial step 
is to find an enlargement
$\cBun \subseteq \cX$ such that $\cX$ is $\Theta$- and $\sS$-complete. Reflecting back on the above
setting, observe that the pushforward $j_\ast E$ is automatically a $Y$-flat equivariant family of
coherent sheaves, but that the zero fiber $j_\ast(E)|_{C_0}$ may have torsion. In other words, the
larger stack $\cCoh(C)$ of coherent sheaves on $C$ is $\Theta$- and $\sS$-complete. 
It is in fact a general phenomenon that moduli stacks of objects in an abelian category are 
$\Theta$- and $\sS$-complete \cite[\S 7]{ahlh-existence}. The line bundle $\cL_{\rm{adj}}$ naturally
extends to $\cCoh(C)$, and it is
fairly easy to verify  that a bundle is semistable as a point in $\cBun(C)$ if and only if it is
semistable in $\cCoh(C)$.

\medskip
\noindent{\textbf{Second approach:}}
The following ideas were developed in \cite[\S 5]{halpernleistner2022structureinstabilitymodulitheory}, and receive an expository treatment in \cite{modulibook}.

\begin{definition}\label{D:monotone_line_bundle}
    A line bundle $\cL$ on an algebraic stack $\cX$ 
    is \emph{$\Theta$-monotone} (resp., \emph{$\sS$-monotone}) if for any
    morphism $f \co (Y \setminus 0)/\bG_m \to \cX$, there is a proper birational morphism $\Sigma /
    \bG_m \to Y/\bG_m$ and an extension $\tilde{f} \co \Sigma/\bG_m \to \cX$ such that
    $\tilde{f}^\ast(\cL)$ is relatively ample over $Y/\bG_m$.
\end{definition}

Note that this condition holds automatically if $\cX$ is $\Theta$- and $\sS$-complete, and
hence this approach can be viewed as a generalization of the first approach.

\begin{proposition}
    If $\cL$ is a $\Theta$- and $\sS$-monotone line bundle on an algebraic stack $\cX$, then
    $\cX^{\cL-\ss}$ is $\Theta$- and $\sS$-complete.
\end{proposition}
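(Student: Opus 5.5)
Fix a DVR $R$ and a morphism $f\colon (Y\setminus 0)/\bG_m \to \cX^{\cL-\ss}$, where $Y$ is $\Spec R[t]$ or $\Spec R[s,t]/(st-\pi)$. We must show that $f$ extends uniquely to $Y/\bG_m \to \cX^{\cL-\ss}$. The plan is to use monotonicity to obtain a partial extension on a proper birational modification. Then semistability of the generic points, together with relative ampleness, forces the modification to be trivial over $Y/\bG_m$ and the extension to land in the semistable locus.

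Concretely, monotonicity gives a proper birational $p\colon \Sigma/\bG_m \to Y/\bG_m$ and an extension $\tilde f\colon \Sigma/\bG_m\to\cX$ with $\tilde f^\ast\cL$ relatively ample. Since $p$ is an isomorphism away from $0$, the exceptional locus lies over the closed point $0$. The fiber $\Sigma_0 := p^{-1}(0)$ is a proper $\bG_m$-equivariant scheme over the residue field $K$ of $R$, or the corresponding quotient in the $\ST$ case.

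\textbf{Step 1: the fiber has no positive-dimensional part.} Suppose $\Sigma_0$ contained a curve. Since $\bG_m$ acts on a proper scheme, it contains a $\bG_m$-fixed point $\sigma$, and we can choose it as the limit $\lim_{\lambda\to 0}\lambda\cdot z$ of a non-fixed point $z$. The orbit closure gives a map $\Theta_K \to \Sigma_0/\bG_m$. Relative ampleness of $\tilde f^\ast\cL$ on the projective $\bG_m$-curve through $z$ makes the weights at its two fixed ends have opposite strict signs, so one endpoint gives a map $g\colon\Theta_{K'}\to\cX$ with $\wt(g^\ast\cL)<0$ and $g(1)=\tilde f(z)$.

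We then propagate this destabilization back to a point of $Y\setminus 0$. Points of the fiber are themselves limits of $\Theta$-degenerations of points in the image of $f$: in the $\Theta_R$ case the generic point over $t\neq 0$, and in the $\ST$ case the two punctured axes. The goal is to show this yields a destabilizing $\Theta$-map from a point of $Y\setminus 0$, contradicting semistability. I expect this to be the main obstacle. The reasonable way through is an \emph{HN-type argument}: compose the $\bG_m$-equivariant curves, i.e.\ concatenate filtrations, and use that the weight is additive along chains of $\bG_m$-fixed points.

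A cleaner route I would try first is to argue directly on the stacky curve $\Sigma/\bG_m$. Restrict to a $\bG_m$-invariant curve $D$ in $\Sigma$ passing through the generic point of $Y\setminus 0$, such as the proper transform of $\{\pi=0\}$ or an axis, so that $D/\bG_m$ is a chain of copies of $\Theta$ and $B\bG_m$. The degree of $\tilde f^\ast\cL$ on the exceptional components is positive. The weight at the ends is then a sum of these positive degrees plus the weight at the end lying in $Y\setminus 0$, which is $\ge 0$ by semistability. Sign bookkeeping should produce a fixed point in the exceptional part whose $\Theta$-map from a point of $Y\setminus 0$ has negative weight, which is a contradiction. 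Hence $p$ is quasi-finite, so it is finite and birational. Since $Y$ is normal (regular in the $\Theta_R$ case, and $R[s,t]/(st-\pi)$ is regular too), $p$ is an isomorphism, and $\tilde f$ is an extension.

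\textbf{Step 2: the extension lands in $\cX^{\cL-\ss}$, and it is unique.} The only new point is $\tilde f(0)$. If it were unstable, a destabilizing $\Theta$-map at $0$ could be combined with the family $\tilde f$ in the same way, via the weight additivity above, to destabilize a point of $Y\setminus 0$. The cleanest way to see this is to note that in the construction one may replace $\Sigma$ by a blow-up at $0$ realizing that destabilization and then rerun Step 1. For uniqueness, any two extensions agree on the dense open $(Y\setminus 0)/\bG_m$. Extensions from $Y/\bG_m$ are unique once $\cX$ has separated diagonal properties making $j_\ast$-type extensions unique, which is implicit in the framework. Alternatively, uniqueness follows because any extension is itself an instance of the monotone extension, with $\Sigma = Y$.
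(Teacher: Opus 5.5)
Your overall plan is the right one: use the weights of $\tilde f^\ast\cL$ on $\Sigma_0$ to force $\Sigma\to Y$ to be an isomorphism, then check that the new point is semistable. However, both substantive steps are missing, and the first version of Step 1 is wrong.

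Write $\omega(x)$ for the weight of $\tilde f^\ast\cL$ at a $\bG_m$-fixed point $x\in\Sigma_0$. Normalize it so that a $\Theta$-map arriving at $x$ along the given flow has weight $\omega(x)$, and one arriving along the inverse flow has weight $-\omega(x)$. Relative ampleness only says that on each orbit closure in $\Sigma_0$ one has $\omega(\text{attracting end})-\omega(\text{repelling end})>0$. Equivalently, the two $\Theta$-maps from an interior point $z$ to the two ends have weights with positive sum. This does not give ``opposite strict signs'', since $z$ may be stable. In any case, instability of $\tilde f(z)$ would contradict nothing, because $z\notin Y\setminus 0$.

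Your ``cleaner route'' has the right shape, but its whole content is the ``sign bookkeeping'' you postpone. You need three things.
(a) The two distinguished fixed points. For $\Theta_R$, these are the limit $p$ of the special-fiber orbit $\{\pi=0,\,t\neq 0\}$ and the endpoint $q$ of the fixed curve $\overline{\{t=0,\,\pi\neq 0\}}$. For $\ST_R$, they are the endpoints $p,p'$ of the proper transforms of the two punctured axes, reached along the given and the inverse flow respectively.
(b) A proof that $\Sigma_0$ is a chain of orbit closures from $p$ to $q$ (resp.\ $p'$), all oriented the same way, with at least one link if $\Sigma_0$ is not finite. For instance: $\Sigma$ may be taken integral, hence is the blow-up of a $\bG_m$-homogeneous ideal cosupported at $0$. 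Such ideals of $R[t]$ and $R[s,t]/(st-\pi)$ are monomial, since every weight space is $R$ times a monomial. So the normalization of $\Sigma$ is toric, and its fiber over $0$ is such a chain.
(c) The correct endpoint constraints. For $\Theta_R$, $\omega(p)\ge 0$ by semistability of $\xi_k$. Also $\omega(q)=0$, because the weight is constant along the fixed curve and the graded point $f(0_K)$ is semistable (apply both trivial filtrations $\Theta\to B\bG_m\to\cX$). Semistability of $\xi_K$ alone only yields the useless $\omega(q)\ge 0$. Then $0=\omega(q)>\omega(p)\ge 0$, a contradiction. For $\ST_R$, the two special fibers give $\omega(p)\ge 0\ge\omega(p')$, against $\omega(p')>\omega(p)$.

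Step 2 is also a genuine gap. Once $\Sigma\cong Y$, you must still show that $x_0:=\tilde f(0)$ is semistable. ``A blow-up at $0$ realizing the destabilization'' has no meaning here. A destabilizing $g\colon\Theta_{k'}\to\cX$ of $x_0$ comes from a cocharacter of $\Aut(x_0)$ unrelated to the $\bG_m$ acting on $Y$, so no modification of $Y$ realizes it. Moreover, monotonicity only controls the specific $\Sigma$ it hands you.

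Concatenating $g$ with the weight-zero filtration of $\xi_k$ (or $\xi'_k$) given by $\tilde f$ does produce a filtration of weight $\wt(g^\ast\cL)<0$. In GIT terms, with $\lambda,\mu$ the cocharacters of $\tilde f$ and $g$, this means passing to $\lambda^N\mu$ with $N\gg 0$. But this works only when $g$ is compatible with the $\bG_m\subseteq\Aut(x_0)$ coming from $\tilde f$, i.e.\ when $\mu$ commutes with $\lambda$. So you must first replace $g$ by a compatible destabilizing filtration. When $\cX$ itself is $\Theta$-complete, one can take a limit of the $\bG_m$-conjugates of $g$ in $\ev_1^{-1}(x_0)$. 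Under mere monotonicity you need a substitute, and your sketch supplies none.

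Finally, uniqueness is not a consequence of monotonicity, which asserts no uniqueness. It comes from the affine diagonal together with Hartogs at the codimension-two point $0$ of the normal surface $Y$.
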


Returning to $\cX = \cBun$ and a vector bundle $E$ on $C \times (Y \setminus 0)$, the pushforward
$j_\ast E $ is locally free away from finitely many points in the fiber $C_0$. After
removing a $\bG_m$-invariant horizontal divisor $D \subseteq C_Y$ containing these points, one can
construct an isomorphism of equivariant sheaves $j_\ast(E)|_{C_Y \setminus D} \cong F|_{C_Y
\setminus D}$, where $F = \oplus_i \oh_Y\langle w_i \rangle$ is a sum of trivial bundles twisted by
characters of some weights $-w_i$. This isomorphism allows one to sandwich $F|_{C_{Y\setminus
0}}(-nD) \subseteq E \subseteq F|_{C_{Y\setminus 0}}(nD)$ for some $n \gg 0$.

We now observe that the data of a flat family of sheaves $E'$ on $C$ with $F(-nD) \subseteq E' \subset
F(nD)$, which is automatically locally free, is the same as a flat family of quotients of the family
of $0$-dimensional sheaves $F(nD)/F(-nD) \to Q$, where $Q$ is mapped to its preimage under the
quotient morphism $F(nD) \to F(nD)/F(-nD)$. This defines a morphism $\Quot(C_Y, F(nD)/F(-nD)) \to
\cBun$, and crucially, the pullback of $\cL_{\rm{adj}}$ is ample relative to $Y$. 
The bundle $E$ determines a
section over $Y \setminus 0$ of the equivariant proper morphism $\Quot(C_Y, F(nD)/F(-nD)) \to Y$. We
let $\Sigma \hookrightarrow \Quot(C_Y, F(nD)/F(-nD))$ denote the closure of this section, which is a
proper $Y$-scheme, and the composition $\Sigma /\bG_m \to \Quot(F(nD)/F(-nD))/\bG_m \to \cBun$
satisfies \Cref{D:monotone_line_bundle}.

The appearance of the Quot scheme in this second approach is very different than the Quot scheme in
GIT. Instead, $\Quot(F(nD)/F(-nD))$ is an approximation of a Beilinson--Drinfeld Grassmannian, which
is an ind-projective ind-scheme. It can be useful to think of this approach as an infinite
dimensional generalization of GIT. This infinite dimensional GIT was first developed in \cite{hl-fh-gauged-maps, hl-fh-j}, where it is shown to generalize naturally to other ``non-linear'' moduli
problems, such as the moduli of generalized Bradlow pairs, or the stack of maps $C \to X/G$ from a smooth curve $C$.

\subsection{\texorpdfstring{$\Theta$}{Theta}-stratifications} \label{subsec:theta-stratifications}
For any stack $\cX$, we define the stack of filtered points of $\cX$ to be $\Filt(\cX) :=
\Map(\Theta,\cX)$. This is equipped with an evaluation map ${\rm ev}_1 \co \Filt(\cX) \to \cX$
taking $f \co \Theta \to \cX$ to $f(1)$.  When $\cX = \cBun_{r,d}$, a $\base$-point $f \in
\Filt(\cX)$ is a $\bZ$-weighted filtration 
$E_p \subsetneq \cdots \subsetneq E_0 = E$ with  
weights $w_0 < \cdots < w_p$, and $\ev_1(f) = [E]$ for this $f$.  It is often convenient to work instead
with the stacks of $\bQ$-weighted
filtered points, $\Filt_\bQ(\cX)$, obtained by stabilizing under certain morphisms $\Filt(\cX) \to
\Filt(\cX)$ that scale weights \cite[Prop.~1.3.11]{halpernleistner2022structureinstabilitymodulitheory}.

We can  regard a Harder-Narasimhan filtration  $E_p \subsetneq \cdots \subsetneq E_0 = E$ of a
vector bundle as a point in $\Filt_\bQ(\cBun_{r,d})$ by equipping it with the weights $w_i :=
\mu(E_i/E_{i+1}) - \mu(E) \in \bQ$. The set of all Harder-Narasimhan filtrations defines a subset
$\cS \subseteq\Filt_{\bQ}(\cX)$, whose key properties are generalized in the following:

\begin{definition}[Variant of {\cite[Def.~2.1.2]{halpernleistner2022structureinstabilitymodulitheory}}]
    A \emph{$\Theta$-stratification} of a stack $\cX$ is an open substack $\cS \subseteq
    \Filt_\bQ(\cX)$ along with a total preordering on the set of connected components $\pi_0(\cS)$
    such that $\ev_1 \co \cS \to \cX$ is universally bijective, and $\forall \gamma \in \pi_0(\cS)$,
    \begin{enumerate}
        \item $\cX_{\leq \gamma} := \cX \setminus \bigcup_{\gamma' > \gamma} \ev_1(\cS_{\gamma'})
        \subseteq\cX$ is open, hence an open substack,
        \item $\cS_\gamma \subseteq\Filt_\bQ(\cX_{\leq \gamma}) \subseteq\Filt_\bQ(\cX)$, and
        \item $\ev_1 \co \cS_\gamma \to \cX_{\leq \gamma}$ is a closed immersion.
    \end{enumerate}
\end{definition}

For two components $\gamma, \gamma'$ of $\Filt_\bQ(\cBun_{r,d})$, we say $\gamma' > \gamma$ if
$\nu(\gamma')>\nu(\gamma)$, where $\nu \co \Filt_\bQ(\cX) \to \bR$ is the locally constant function
that assigns a weighted filtration $f = (E_\bullet,w_\bullet)$ with associated graded pieces $G_i :=
E_i / E_{i+1}$ the value
\begin{equation}\label{E:example_numerical_invariant}
    \nu(f) = \sum_i \rk(G_i) \cdot 
      \left( -w_i \left(\mu(G_i) - \mu(E) \right) +  \frac{1}{2} w_i^2 \right).
\end{equation}
This preordering restricted to $\cS$ defines a $\Theta$-stratification. Remarkably, the function
$\nu$ also identifies the open substack $\cS \subseteq \Filt_\bQ(\cX)$ of Harder-Narasimhan
filtrations: If $M^\nu \co |\cBun_{r,d}| \to \bR$ is given by
\begin{equation} \label{eqn:Mnu}
M^\nu(x) = \inf \{\nu(f) | f \in \Filt_\bQ(\cBun_{r,d}) \text{ s.t. }f(1) \cong x\},
\end{equation}
then $f \in |\Filt_\bQ(\cBun_{r,d})|$ lies in $\cS$ if and only if $\nu(f) = M^{\nu}(f(1))$.

An important application of $\Theta$-stratifications is to study wall-crossing behavior: as one varies a line bundle or numerical invariant, the semistable locus $\cX^{\ss}$ typically undergoes discrete changes at ``walls'' in the space parameterizing these stability conditions. If $\cX^{\ss}$ is the open piece of a $\Theta$-stratification of $\cX$, then one can often say which strata remain unstable and which strata pick up semistable points as one crosses a wall. A particularly clean version of this is developed in \cite[\S 5.6]{halpernleistner2022structureinstabilitymodulitheory} for a stack $\cX$ that admits a quadratic norm on graded points and a good moduli space $\cX \to X$. Any class in $\rm{NS}(\cX)_\bQ/\rm{NS}(X)_\bQ$ defines a $\Theta$-stratification of $\cX$ whose semistable locus admits a good moduli space that is projective over $X$, and the unstable strata precisely encode where the semistable locus can jump --- this ``variation of good moduli space'' is an intrinsic version of the classical theory of variation of GIT quotient \cite{dolgachev98variation}.

One kind of result along these lines is to use the semiorthogonal decompositions of $\rm{D}^-_{\rm{coh}}(\cX)$ associated to a $\Theta$-stratification \cite{halpernleistner2021derivedthetastratificationsdequivalenceconjecture} to study how the derived category of the semistable locus $\cX^{\rm ss}$ varies under wall-crossing. The structure theory for $\rm{D}^-_{\rm{coh}}(\cX)$ also leads to $K$-theoretic wall-crossing formulas \cite[\S 4.1]{halpernleistner2025categoricalperspectivenonabelianlocalization}. In a different direction, this variation of good moduli space picture has been used to formulate an ``intrinsic'' version of DT theory, which applies to more general moduli problems than traditional DT theory, and to prove wall-crossing formulas in that setting \cite{bu2025intrinsicdonaldsonthomastheoryi, bu2025intrinsicdonaldsonthomastheoryii}. 

Another use for $\Theta$-stratifications in moduli theory is the following:
\begin{theorem}[Semistable reduction {\cite[Thm.~6.5]{ahlh-existence}}]
    Suppose that $\cS \subseteq\Filt_\bQ(\cX)$ is a $\Theta$-stratification such that $\pi_0(\cS)$ is
    well-ordered. Then for any DVR $R$ and morphism $\xi \co \Spec R \to \cX$,
    there is a finite extension $R \subseteq R'$ and a morphism $\xi' \co \Spec R' \to \cS$ such that
    the generic point of $\xi'$ is a lift of the generic point of $\xi$ under 
    $\ev_1 \co \cS \to \cX$.
\end{theorem}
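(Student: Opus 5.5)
The statement to prove is the Semistable reduction theorem: given $\xi\colon \Spec R \to \cX$, we want a finite extension $R \subseteq R'$ and $\xi'\colon \Spec R' \to \cS$ whose generic point lifts the generic point of $\xi$. The plan is a noetherian-induction-style argument on the well-ordered set $\pi_0(\cS)$, using at each stage that the stratum containing a point is the closed image of $\ev_1$ in an open substack.

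Let $K$ be the fraction field of $R$. The generic point $\xi_K$ lies in $\ev_1(\cS_\gamma)$ for a unique $\gamma$, since $\ev_1$ is universally bijective. Hence there is a finite extension $K'$ of $K$ and a $K'$-point of $\cS_\gamma$ lifting $\xi_K$. We replace $R$ by its normalization in $K'$ (localized at a maximal ideal), which is again a DVR finite over $R$. This reduces us to the following task: show that, after a further finite extension, the $K$-point $\tilde\xi_K \in \cS_\gamma(K)$ extends to an $R$-point of $\cS$.

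The natural object is the special point of $\xi$. Let $\gamma_0$ be the stratum containing $\xi(s)$, where $s$ is the closed point.
\begin{enumerate}
\item If $\gamma_0 \le \gamma$, then $\xi$ lands in the open substack $\cX_{\le\gamma}$. Indeed, the complement of $\cX_{\le\gamma}$ is the union of strata $\gamma'>\gamma$, and $\xi_K \in \cX_{\le\gamma}$; if $\xi(s)$ were outside, its stratum would exceed $\gamma$. Since $\ev_1\colon \cS_\gamma \to \cX_{\le\gamma}$ is a closed immersion, the generic point lying in the closed substack forces the whole of $\Spec R$ to lie in it. This gives the lift $\Spec R \to \cS_\gamma$.
\item If $\gamma_0 > \gamma$, we must modify $\xi$. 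We use the filtration of $\xi(s)$, i.e.\ the point of $\cS_{\gamma_0}$ above it, and the theory of $\Theta$-complete/elementary modifications. Concretely, $\cS_{\gamma_0}$ sits over the closed substack $\ev_1(\cS_{\gamma_0}) \subseteq \cX_{\le \gamma_0}$, and $\xi$ meets it only at $s$. Following the HN-filtration pattern for bundles, we would use the canonical filtration at $s$ to construct a map from $\ST_R \setminus 0$ (or $\Theta_R$ glued with $\Spec R$) to $\cX$. Extending it after a finite base change, we restrict to the other copy of $\Spec R$ inside $\ST_R$.
\end{enumerate}
The modification in case 2 produces a new family $\xi_1$ with the same generic fiber (up to the base change) and special point in a strictly smaller stratum $\gamma_1<\gamma_0$. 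It is the analogue of the elementary modification of a bundle along the maximal destabilizing subsheaf of its special fiber. By well-ordering, iterating terminates after finitely many steps with $\gamma_n \le \gamma$, and we then conclude by case 1. Finiteness of the total base change follows because we take only finitely many finite extensions.

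The main obstacle is the modification step. We need to extend $\xi$ across the codimension-two point of $\ST_R$, or equivalently construct the elementary transform, and then prove that the stratum of the new special point strictly drops. I would first try to work in the closed substack $\ev_1(\cS_{\gamma_0})$ and its normal direction. The key input I would try to establish is that $\ev_1\colon\cS_{\gamma_0}\to\cX_{\le\gamma_0}$ being a closed immersion makes the $\bG_m$-weights on the conormal bundle of the stratum positive, in the sense of the sign remark after \Cref{D:theta_stability}. With that positivity, a weighted blowup/root-stack construction along the stratum after a ramified extension $R\subseteq R'$ should produce the required extension and strictly decrease the order of contact. If the special point stays in $\gamma_0$, the invariant that decreases would be the order of contact rather than the stratum, and we would need an auxiliary argument that this order cannot decrease indefinitely.
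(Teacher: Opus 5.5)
Your reduction steps and Case~1 are fine: the generic point lies in $\ev_1(\cS_\gamma)$, strata can only go up under specialization, and if the special point lies in $\cX_{\le\gamma}$ then the closed immersion $\ev_1\colon\cS_\gamma\to\cX_{\le\gamma}$ forces $\Spec R$ to factor through it.

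\textbf{The gap.} All of the content of the theorem is in Case~2, and there you give no argument. What you need is a single-stratum statement: if $\xi_k\in\ev_1(\cS_{\gamma_0})$ and $\xi_K\notin\ev_1(\cS_{\gamma_0})$, then after a finite extension there is a $\xi'$ with the same generic fibre and $\xi'_{k'}\notin\ev_1(\cS_{\gamma_0})$.
\begin{enumerate}
\item Your middle paragraph asserts this (``special point in a strictly smaller stratum''). Your last paragraph then concedes that the stratum may not drop, and that you would need a termination argument for an ``order of contact''. That termination is exactly the delicate part of Langton's original proof. Moreover, order of contact is not a well-founded invariant here: a ramified extension of index $e$ multiplies it by $e$, and once normalized it is a positive rational, which can decrease forever.
\item The construction you sketch targets the wrong extension problem. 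A map from $\ST_R\setminus 0$ is already a pair of $R$-points with identified generic fibres, so it presupposes $\xi'$. A map from $\Theta_R\setminus 0$ involves a filtration of $\xi_K$, not of $\xi_k$. Extending either across $0$ is $\sS$- or $\Theta$-completeness, which $\cX$ is not assumed to have (and which fails for $\cBun_{r,d}$).
\item The data you actually have is $\xi$ on the open $\{t\neq 0\}\cong\Spec R$ of $\ST_R$, together with the HN filtration $f\colon\Theta_k\to\cX$ of $\xi_k$ on the closed substack $\{s=0\}\cong\Theta_k$, glued at the special point. Extending this to $\ST_{R'}$ for a suitably ramified $R'$ has to come from the $\Theta$-stratum itself. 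The relevant inputs are the lift of $\xi|_{\Spec R/\pi^n}$ to $\cS_{\gamma_0}$ provided by the closed immersion, plus control of the weights. It cannot come from any valuative property of $\cX$. Your conormal-weight positivity and weighted blow-up are heuristics that neither produce this extension nor locate the new special point.
\end{enumerate}

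\textbf{What would close it.} Suppose $\phi\colon\ST_{R'}\to\cX$ is such an extension and $\xi':=\phi|_{\{s\neq 0\}}$. Then $\phi|_{\{t=0\}}$ degenerates $\xi'_{k'}$ to $f(0)$ in the \emph{opposite} $\bG_m$-direction. Since $f(0)\in\ev_1(\cS_{\gamma_0})$ lies in the open substack $\cX_{\le\gamma_0}$, so does its generization $\xi'_{k'}$.

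Now suppose $\xi'_{k'}$ were in $\ev_1(\cS_{\gamma_0})$. The closed immersion would lift this opposite degeneration to $\cS_{\gamma_0}$, ending at the split filtration of $f(0)$ with the inverse of the canonical cocharacter acting at the origin. Because the canonical $\bG_m$ contracts $\cS_{\gamma_0}$ onto its centre, such a lift is only possible if the degeneration is trivial, i.e.\ if $\xi'_{k'}$ is fixed by that cocharacter.

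So you must choose the ramification and the rescaling of $f$ so that the modification is as large as possible. In a local model $\Spec A/G$, this means twisting $\xi$ as far along the HN cocharacter as integrality allows. That choice makes the opposite degeneration non-trivial, and hence puts $\xi'_{k'}$ outside $\ev_1(\cS_{\gamma_0})$. The same closedness argument, using that $\xi'_{k'}$ specializes to $f(0)$, rules out strata tied with $\gamma_0$ in the preorder. This gives strict decrease in a single step, and well-ordering then finishes the proof. Without this, or a genuine Langton-type termination argument, your proposal only establishes the easy case.
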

In particular, if $\cX$ satisfies the existence part of the valuative criterion for properness, then
so does $\cS$. If in addition $\cX^{\rm ss} = \cS \cap \cX$ admits a finite type and separated good
moduli space $M$, then $M$ is proper. 

There is a list of necessary and sufficient conditions for an open substack $\cS \subset
\Filt_\bQ(\cX)$ to define a $\Theta$-stratification \cite[Thm.~2.2.2]{halpernleistner2022structureinstabilitymodulitheory}. On the other hand, the theory of
numerical invariants, which we describe next, allows one to define a $\Theta$-stratification without
first knowing what the HN filtrations should be. Our definition here is simpler than the one in \cite[Def.~4.1.1]{halpernleistner2022structureinstabilitymodulitheory}, because we will not be discussing the concept in enough detail to necessitate the general framework.

\begin{definition}
    A \emph{numerical invariant} on a stack $\cX$ is a locally constant function $\nu \co
    \Filt_\bQ(\cX) \to \bR$. 
\end{definition}

Given a numerical invariant $\nu$ on $\cX$, we can define $M^\nu \co |\cX| \to \bR$ as in
\eqref{eqn:Mnu}. We say
that $\nu$ \emph{defines a $\Theta$-stratification} if the set
\[
\cS := \{f \in |\Filt_\bQ(\cX)| \mid \nu(f) = M^\nu(f(1)) \}
\]
is open in $\Filt_{\bQ}(\cX)$ and is a $\Theta$-stratification after preordering $\pi_0(\cS)$ by
the values of $\nu$. One can canonically regard $\cX \subseteq \Filt_\bQ(\cX)$ as the open and
closed substack parameterizing filtrations $f$ such that $\bG_m \to \Aut_\cX(f(0))$ is trivial, and
we refer to $\cX^{\nu-\ss}:=\cS \cap \cX \subseteq\cX$ as the \emph{semistable locus}.  Observe that
$x \in \cX^{\nu-\ss}$ if and only if $\nu(f) \geq 0$ for all filtrations $f$ with $f(1) \cong x$.

Let us consider the optimization problem $M^\nu(x)$ on $\cBun_{r,d}$. For each unweighted filtration
$E_p \subsetneq \cdots \subsetneq E_0$ of vector bundles, \eqref{E:example_numerical_invariant} can
be regarded as a function on the rational polyhedral cone
\[
\sigma_{E_\bullet}^\circ = \{ w_1 \leq w_2 \leq \cdots \leq w_p \} \subseteq\bR^p_{E_\bullet}.
\]
The points where all $w_i \in \bQ$ and the inequalities are strict correspond to points of
$\Filt_\bQ(\cBun_{r,d})$. The boundary faces of $\sigma_{E_\bullet}$ can be identified with
$\sigma_{\tilde{E}_\bullet}$, where $\tilde{E}_\bullet$ is the filtration obtained from $E_\bullet$
by deleting the step $E_i$ in the filtration if $w_i = w_{i+1}$.

In general, the optimization problem for identifying HN filtrations has this same form:
$\bQ$-weighted filtrations are the rational points of a space, called the \emph{degeneration space of $x
\in \cX(\base)$}, obtained by gluing rational polyhedral cones, and the problem is to find a unique
minimizer of the function $\nu$ on this space. 
There are two key properties for a numerical invariant $\nu$ on an algebraic stack $\cX$:  
\begin{itemize}
  \item \textbf{convexity:} $\nu$ is a strictly convex function on each of these cones
  \item \textbf{condition (R):} $\nu$ achieves a minimum at a rational point on each of these cones
\end{itemize}
Both of these conditions are satisfied for the numerical invariant
\eqref{E:example_numerical_invariant} on $\cBun_{r,d}$, and more generally for any numerical invariant coming from a class in $\rm{NS}(\cX)_\bQ$ and a rational quadratic norm on graded points on $\cX$. See \cite[\S 4.1, \S 6]{halpernleistner2022structureinstabilitymodulitheory} and \cite{modulibook}.

\begin{theorem}\label{T:existence_theta_stratifications} \cite[Thm.~4.5.1]{halpernleistner2022structureinstabilitymodulitheory}
If $\cX$ is an algebraic stack locally of finite type and with affine automorphism groups and
separated inertia, and $\nu$ is a convex numerical invariant on $\cX$ satisfying condition (R), then
$\nu$ defines a $\Theta$-stratification of $\cX$ if and only if the following two conditions hold:
\begin{enumerate}
    \item \textbf{HN boundedness:} for any bounded open substack $\cU \subseteq\cX$, there is a
    second bounded substack $\cV\subseteq\cX$ such that for any destabilizing filtration $f$ of $x
    \in \cU$, there is a second filtration $f'$ of $x$ with $f(0) \in \cV$ and $\nu(f') \leq \nu(f)$.\smallskip

    \item \textbf{HN specialization:} If $x, y \in |\cX|$ and $x$ specializes to $y$, then
    $M^{\nu}(y) \leq M^\nu(x)$, and if equality holds, then any HN filtration of $x$ specializes in
    $\Filt(\cX)$ to an HN filtration of $y$.
\end{enumerate}
\end{theorem}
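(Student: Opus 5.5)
The plan has two halves: necessity, which is routine, and sufficiency, which carries the real content.

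\emph{Necessity.} Suppose $\nu$ defines a $\Theta$-stratification $\cS$.
\begin{enumerate}
\item For HN boundedness: a bounded $\cU$ meets only finitely many strata, by quasi-compactness of $\cU$ and openness of each $\cX_{\le\gamma}$. Take $\cV$ to be a bounded open neighborhood of the images $\ev_0(\cS_\gamma\times_\cX \cU)$. These are quasi-compact because $\ev_1\colon\cS_\gamma\to\cX_{\le\gamma}$ is a closed immersion, hence quasi-compact. The HN filtration $f'$ of $x$ then has $\nu(f')=M^\nu(x)\le\nu(f)$.
\item For HN specialization: the inequality follows from openness of $\cX_{\le\gamma}$, since a point specializing into $\cX_{\le\gamma}$ must lie in $\cX_{\le \gamma}$. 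The equality case follows from closedness of $\ev_1(\cS_\gamma)$ in $\cX_{\le\gamma}$, together with $\ev_1$ restricted to $\cS_\gamma$ being a closed immersion, which lets one lift the specialization.
\end{enumerate}

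\emph{Sufficiency, Step 1: existence and uniqueness of HN filtrations pointwise.} Fix $x\in\cX(K)$. Filtrations of $x$ are the rational points of the degeneration space, a union of rational polyhedral cones glued along faces. On each cone, convexity and (R) give a unique rational minimizer of $\nu$. To pass to a global unique minimizer, I would argue as follows.
\begin{enumerate}
\item Any two filtrations lie in a common cone, or at least in a common apartment. This uses the local structure theorem (\Cref{thm:local-structure2}) together with the Bruhat-type fact that two one-parameter subgroups of a reductive group lie in a common parabolic's Levi, up to conjugation.
\item Strict convexity on that common region then forces the minimizer to be unique.
\item Existence requires that the infimum is attained. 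Here HN boundedness reduces the search to filtrations with $f(0)$ in a bounded $\cV$, which involves only finitely many cones, and (R) gives a rational minimizer on each.
\end{enumerate}

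\emph{Sufficiency, Step 2: $\cS$ is open and each stratum is closed.} Define $\cS$ as in the text. I would show $M^\nu$ is upper semicontinuous and takes locally finitely many values.
\begin{enumerate}
\item Finiteness again comes from HN boundedness and local finiteness of connected components of $\Filt_\bQ$ over bounded pieces.
\item Semicontinuity is exactly the inequality half of HN specialization, combined with constructibility of the level sets. Constructibility holds because each level set is the image of a finite union of components of $\Filt_\bQ$ under the finite-type map $\ev_1$.
\item It follows that $\cX_{\le\gamma}$ is open.
\end{enumerate}
Then show $\ev_1\colon\cS_\gamma\to\cX_{\le\gamma}$ is a closed immersion.
\begin{enumerate}
\item It is a monomorphism, by Step 1 (uniqueness up to unique isomorphism, together with the infinitesimal version via the same convexity argument on tangent data).
\item It is proper: the valuative criterion is supplied by the equality case of HN specialization.
\item Since $\ev_1$ is representable and locally of finite type (a standard fact for $\Filt$ under affine stabilizers), a proper monomorphism is a closed immersion.
\end{enumerate}
Openness of $\cS$ in $\Filt_\bQ(\cX)$ would follow because on $\ev_1^{-1}(\cX_{\le\gamma})\cap(\text{component }\gamma)$, the condition $\nu=M^\nu\circ\ev_1$ is cut out by the open condition $M^\nu\le \nu(\gamma)$.

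\emph{Main obstacle.} I expect the global uniqueness in Step 1 to be hardest: gluing cone-wise convexity into uniqueness on the whole degeneration space, which is where separated inertia and the local structure theorem are genuinely needed. A close second is the infinitesimal monomorphism statement in Step 2. If the apartment argument fails, my first fallback would be to work étale-locally on $\Spec A/\GL_n$, using Kempf's optimal destabilization theorem as in property (4) of good moduli spaces.
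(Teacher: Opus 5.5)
There is a genuine gap in Step 1, exactly where you expected trouble: uniqueness of HN filtrations.

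\textbf{The uniqueness claim fails.} You assert that any two filtrations of $x$ lie in a common cone (or a common apartment on which $\nu$ is strictly convex). This is false in the generality of the theorem, and the local structure theorem cannot supply it, for two reasons. First, it gives a chart $\Spec A/G_x\to\cX$ around $x$ only when $G_x$ is linearly reductive. Second, a filtration $f$ of $x$ has $f(0)$ at a specialization of $x$ that such a chart typically does not see. The Bruhat/Kempf argument works for $\Spec A/G$ because there the filtrations of $x$ through a fixed maximal torus form a convex rational cone; general stacks lack this.

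\textbf{Counterexample.} Let $\cX=\bP^1/\bG_m$ (weights $0,1$) and let $x$ be a point with trivial stabilizer. Let $\nu(f)=\wt(f^\ast\ell)+\frac{1}{2}b(f)$, where $b(t^k)=k^2$ is the standard norm. Choose $\ell\in\Pic(\cX)_\bQ$ (its weights at the two fixed points can be prescribed arbitrarily) so that $\wt(f^\ast\ell)=-|k|/2$ for the filtration $f=t^k$ of $x$, whether it limits to $0$ ($k>0$) or to $\infty$ ($k<0$). Then:
\begin{itemize}
\item $\nu$ comes from a class and a quadratic norm, so it is convex and satisfies (R). The filtrations of $x$ form two rays with $\nu(t^k)=-|k|/2+k^2/2$ on each.
\item HN boundedness is trivial since $\cX$ is quasi-compact.
\item No map $\Theta^2\to\cX$ restricts to both rays. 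Indeed, a local-structure chart at $x$ is the open substack $\bG_m/\bG_m\cong\Spec\base$, which has no nontrivial filtrations at all.
\end{itemize}
Yet $t^{1/2}$ and $t^{-1/2}$ are both HN filtrations of $x$, with $\nu=-\frac{1}{8}$. What fails is HN specialization. We have $M^\nu\equiv-\frac{1}{8}$ on $\cX$, but $t^{-1/2}$ lies in the component of $\Filt_\bQ(\cX)$ over the attracting cell $(\bP^1\setminus 0)/\bG_m$. So it cannot specialize to a filtration of the fixed point $0$.

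\textbf{Consequences.} Uniqueness therefore cannot come from convexity, (R) and local structure. It has to be extracted from HN specialization, which your outline uses only for semicontinuity and for the valuative criterion. The monomorphism property of $\ev_1\colon\cS_\gamma\to\cX_{\le\gamma}$ in Step 2 rests on Step 1, and its ``infinitesimal version'' is not actually argued. So the closed-immersion conclusion is not established either.

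\textbf{Smaller points.}
\begin{itemize}
\item $\Filt_\bQ(\cX)$ has infinitely many components even over one point (e.g.\ $\Filt_\bQ(B\bG_m)=\bigsqcup_{q\in\bQ}B\bG_m$), and a point of $B\GL_2$ already has infinitely many cones. The finiteness you need for existence and constructibility is finiteness of cone types over a quasi-compact open, not of components or cones.
\item In the necessity direction, quasi-compactness only gives $\cU\subseteq\cX_{\le\gamma_0}$ for a single $\gamma_0$. Finiteness of the strata meeting $\cU$ needs a noetherian induction using closedness of the strata.
\item On a component $\gamma$, the condition cutting out $\cS$ is $M^\nu(f(1))\ge\nu(\gamma)$, i.e.\ $f(1)\in\cX_{\le\gamma}$. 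The inequality $M^\nu(f(1))\le\nu(\gamma)$ that you wrote holds automatically.
\end{itemize}
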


There is a notion of $\Theta$- and $\sS$-monotonicity for a numerical invariant $\nu$ that is
analogous to \Cref{D:monotone_line_bundle} \cite[Def.~,5.2.1,5.4.7]{halpernleistner2022structureinstabilitymodulitheory}. It states that with $Y = \Spec(R[t])$ or
$\Spec(R[s,t]/(st-\pi))$ as above, any morphism $(Y \setminus 0) / \bG_m \to \cX$ extends to some
$\Sigma / \bG_m \to \cX$ in such a way that for any $\bG_m$-equivariant quasi-finite morphism
$\bP^1_{\base} \to \Sigma_0$ to the fiber of $\Sigma$ over $0 \in Y$, one has
\[
  \nu (\Theta_{\base} \to \{\infty\}/\bG_m \hookrightarrow \bP^1_{\base} / \bG_m \to \cX) 
    > \nu(\Theta_{\base} \to \{0\}/\bG_m \hookrightarrow \bA^1_{\base} / \bG_m \to \cX).
\]
One can use the infinite dimensional GIT technique, which we previously used to show that
$\cL_{\rm{adj}}$ is monotone on $\cBun_{r,d}$, to show that the numerical invariant
\eqref{E:example_numerical_invariant} is $\Theta$- and $\sS$-monotone on $\cBun_{r,d}$. The
HN-specialization condition for $\cBun_{r,d}$ therefore follows from the following general result:
\begin{proposition} 
    If $\nu$ is a $\Theta$- and $\sS$-monotone numerical invariant on $\cX$, then it satisfies the
    HN-specialization property and the semistable locus of $\cX$ is $\Theta$- and $\sS$-complete \cite[Thm.~5.4.8]{halpernleistner2022structureinstabilitymodulitheory}.
\end{proposition}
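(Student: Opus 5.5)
The plan is to reduce both conclusions to one geometric input: the monotonicity inequality, applied to the special fiber $\Sigma_0$ of a modification $\Sigma/\bG_m \to Y/\bG_m$ extending a given map $(Y\setminus 0)/\bG_m \to \cX$. The main tool is the $\bG_m$-equivariant geometry of the proper scheme $\Sigma_0$. Its $\bG_m$-fixed points are connected by chains of $\bG_m$-invariant rational curves, each given by a quasi-finite equivariant map $\bP^1_\base \to \Sigma_0$. Along each such curve, monotonicity says that $\nu$ strictly increases from the attracting end $0$ to the repelling end $\infty$. The argument then walks along such a chain and compares $\nu$ at its two endpoints.

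\textbf{Completeness of the semistable locus.} Let $f\co (Y\setminus 0)/\bG_m \to \cX^{\nu-\ss}$ be given, with $Y = \Spec R[t]$ (the case $\Theta$) or $Y = \Spec R[s,t]/(st-\pi)$ (the case $\sS$).
\begin{enumerate}
\item Choose an extension $\tilde f\co \Sigma/\bG_m \to \cX$ as in the definition of monotonicity, with $\Sigma \to Y$ proper and birational. Let $\Sigma_0$ be the fiber over $0$.
\item If $\Sigma_0$ is a single point, then $\Sigma \to Y$ is an isomorphism near $0$, and $f$ extends. It remains to show that the extension lands in $\cX^{\nu-\ss}$.
\item Otherwise $\Sigma_0$ is positive-dimensional. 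Since $\Sigma_0$ is connected and proper, the Białynicki-Birula decomposition of $\Sigma_0$ gives an equivariant curve $\bP^1 \to \Sigma_0$ whose point $\infty$ lies in the closure of the image of $Y\setminus 0$. One should be able to arrange that $\infty$ is a point of the strict transform of $\{t=0\}$ or $\{s=0\}$.
\item The filtration $\Theta \to \{\infty\}/\bG_m \to \cX$ is then a specialization of the filtration induced by $f$ along the punctured axis, which lands in the semistable locus. Hence its $\nu$ is at least $0$, after also using local constancy of $\nu$.
\item Monotonicity gives a point at the other end with strictly smaller $\nu$. Continuing down the chain eventually reaches a repelling, i.e.\ destabilizing, direction at a point that specializes from a semistable point. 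This contradicts semistability, once one knows that semistability is preserved under the relevant specializations inside the chain.
\item So $\Sigma_0$ must be a point. Uniqueness of the extension follows from the separatedness-type argument in \cite{ahlh-existence}.
\end{enumerate}
The contradiction in step 5 needs care: one must track which endpoints are honest filtrations of points of $f(Y\setminus 0)$.

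\textbf{HN specialization.} Let $x \rightsquigarrow y$ be realized by a DVR $\xi\co \Spec R \to \cX$, with $x$ at the generic point.
\begin{enumerate}
\item Take the HN filtration of $x$, after a finite extension of $R$ if needed. Spread it out to a map $\Theta_K \to \cX$ and view it as a map on $\Theta_R \setminus 0$.
\item Apply $\Theta$-monotonicity to obtain $\Sigma/\bG_m$ over $\Theta_R$.
\item The fiber $\Sigma_0$ contains a fixed point $p$ reached from the special point $y$ over $1$ by a chain of $\bG_m$-curves.
\item Each step in the chain decreases $\nu$, by monotonicity. Together with local constancy of $\nu$ under specialization in $\Filt_\bQ(\cX)$, this yields a filtration of $y$ with $\nu \le M^\nu(x)$, so $M^\nu(y) \le M^\nu(x)$.
\item If equality holds, the chain must be trivial, since every step is a strict inequality. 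The filtration then extends over $\Theta_R$, giving the required specialization of the HN filtration in $\Filt(\cX)$.
\end{enumerate}

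\textbf{Expected main obstacle.} The hardest step should be the combinatorics of the special fiber $\Sigma_0$: producing equivariant chains of rational curves that connect the correct fixed points, with the orientations the monotonicity inequality requires. One also has to confirm that $\nu$, being only locally constant on $\Filt_\bQ$, behaves compatibly with these specializations. I would first attempt this with the Białynicki-Birula attracting-cell decomposition of $\Sigma_0$, reducing to a normal $\Sigma$ if necessary.
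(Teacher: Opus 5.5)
Your framework is the right one: modify $Y$ to $\Sigma$, then run the monotonicity inequality along $\bG_m$-invariant rational curves in $\Sigma_0$. But there is a genuine gap. The step you defer as the ``main obstacle'' is where the proof actually lives, and the way you then chain the inequalities does not work.

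\textbf{The missing structural fact.} After normalizing $\Sigma$, you need $\Sigma_0$ to be a \emph{single chain of orbit closures, consistently oriented by the flow}. It should run from the point $p_1$ where the strict transform of the $t$-axis of the special fiber of $Y$ meets $\Sigma_0$, to the point $q$ where $\Sigma_0$ meets either the closure of $\{t=0\}_K$ (for $\Theta_R$) or the strict transform of the $s$-axis (for $\ST_R$). The reason is as follows.
\begin{itemize}
\item A fixed point of $\Sigma_0$ cannot be an isolated attracting or repelling point, and $\Sigma_0$ contains no pointwise fixed curve. Otherwise an open subset of the generic fiber $\Sigma_K = Y_K$ would flow into it, whereas points of $Y_K$ either flow to $0_K$ or have no limit in $\Sigma$.
\item Hence every fixed point of $\Sigma_0$ other than $q$ is hyperbolic.
\item On a normal $\bG_m$-surface, a hyperbolic fixed point has exactly one incoming and exactly one outgoing orbit.
\end{itemize}
A Bia\l ynicki-Birula decomposition of $\Sigma_0$ alone does not give this. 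Without it, orientations along a connected invariant curve can alternate, and the per-curve inequalities do not telescope.

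\textbf{HN specialization: the inequality runs the wrong way.} Given the chain, you compare only its endpoints. The point $p_1$ carries a filtration of $y$, from the orbit closure of the point over $1$. Via the fixed curve through $0_K$, the point $q$ carries a graded point in the same component as that of the HN filtration $f_K$, so $\nu(q)=M^\nu(x)$. You need $\nu(p_1)\le \nu(q)$, strict unless $\Sigma_0$ is a point. So monotonicity must be used as ``$\nu$ strictly increases along the flow from $p_1$ to $q$''. Whatever convention one reads into the displayed inequality, this is the direction the conclusion forces, and it is the one relative ampleness of $\tilde f^\ast\cL$ on an invariant $\bP^1$ produces. Your step 4 walks from $y$'s point toward $p$ with $\nu$ decreasing. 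That bounds $\nu(p)$ above by $\nu(p_1)$, which is the opposite of what you need.

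\textbf{Completeness: a false principle.} Your steps 3--5 rely on semistability being preserved under specialization. This is false, since the semistable locus is open, not closed. The contradiction should again come from the endpoints.
\begin{itemize}
\item For $\ST_R$, let $\xi_t,\xi_s$ be the $R$-points that $f$ gives on the charts $t\neq 0$ and $s\neq 0$. Then $p_1$ and $q$ carry filtrations of the two semistable points $\xi_t(k)$ and $\xi_s(k)$, with \emph{opposite} orientations. Reversing the orientation changes the sign of the part of $\nu$ that governs semistability. So both filtrations being non-destabilizing contradicts strict monotonicity along a nontrivial chain.
\item For $\Theta_R$, the value at $q$ is $0$, because $f_K$ joins the two semistable points $f(1_K)$ and $f(0_K)$. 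Then $p_1$ would give a destabilizing filtration of $f(1_k)$.
\end{itemize}
Finally, when $\Sigma_0$ is a point you still must show that the extension sends $0$ into $\cX^{\nu-\ss}$. That point is the associated graded of a filtration with $\nu=0$ of a semistable point, and your step 2 leaves this open.
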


The upshot of this whole discussion is that for a convex monotone numerical invariant satisfying
condition (R), all one must do is analyze two kinds of boundedness questions: HN-boundedness implies
the existence of a $\Theta$-stratification, which in particular implies that the semistable locus is
open, and boundedness of the semistable locus then implies the existence of a separated good moduli
space.

\begin{example}[HN boundedness for $\cBun_{r,d}$]\label{E:HN_boundedness} The function
$\nu$ on each cone $\sigma_{E_\bullet}$ extends naturally to the ambient space
$\bR^p_{E_\bullet}$. One can optimize each summand of $\nu$ in \eqref{E:example_numerical_invariant} 
independently, so the unique unconstrained minimum occurs where $w_i = \mu(G_i) - \mu(E)$ for all
$i$. If this point lies outside of $\sigma^\circ_{E_\bullet}$, which happens precisely if $\mu(G_i)$
is \emph{not} increasing in $i$, then the constrained minimizer of $\nu$ lies on the boundary of
$\sigma_{E_\bullet}$ and therefore lies on $\sigma_{\tilde{E}_\bullet}$ for some filtration
$\tilde{E}_\bullet$ obtained by deleting steps of $E_\bullet$. It follows that the value of $\nu$ on
any weighted filtration is dominated by the value on some filtration that is convex in the sense
that $\mu(G_1)<\cdots<\mu(G_p)$. HN boundedness follows from this analysis combined with the
following geometric input: the set of convex filtrations of a bounded family of vector bundles is
bounded. This is worked out in greater generality in \cite{hl-fh-j}.
\end{example}

After this argument for the existence of a maximizer of $\nu$, a small argument is still needed to show that the maximizer agrees with the HN filtration, originally defined as the unique convex filtration whose associated graded pieces are semistable. (See \cite[\S 6]{halpernleistner2022structureinstabilitymodulitheory}.) This alternative characterization of the maximizer of $\nu$ actually holds in much greater generality. Any numerical invariant on a stack $\cX$ induces a numerical invariant on the stack of \emph{graded points} $\Grad(\cX) := \Map(B\bG_m, \cX)$. The \emph{Recognition Theorem} \cite[Thm.~5.4.4]{halpernleistner2022structureinstabilitymodulitheory} says that under fairly general hypotheses, a filtration $f : \Theta_{\base} \to \cX$ is a HN filtration for $f(1)$ if and only if $f|_{\{0\}/\bG_m} : B(\bG_m)_{\base} \to \cX$ is semistable as a point of $\Grad(\cX)$.

Typically each step of the HN filtration of a vector bundle is constructed one-by-one, using the original definition of HN filtrations.
\Cref{E:HN_boundedness} amounts to a conceptually different proof that generalizes to ``non-linear'' moduli problems such as the moduli of
generalized Bradlow pairs on higher dimensional schemes \cite{hl-fh-j}, and the mapping stack
$\Map(C,X/G)$ where $C$ is a smooth curve, $G$ is a reductive group, and $X$ is an affine $G$-scheme
\cite{hl-fh-gauged-maps}. In both cases, the HN filtrations are not convex and cannot be constructed
step-by-step, so the approach above is the only one we know that works.

\begin{remark}
    The original formulation of the theory of $\Theta$-stability defined a numerical invariant as a
    scale-invariant function on filtrations, which leads to HN filtrations being defined only up to
    positive scale. This also requires one to treat the semistable locus separately, as opposed to
    here where it is just a stratum $\cS \subseteq \Filt(\cX)$ that happens to parameterize trivial
    filtrations. The formulation we present here is part of work-in-progress between the second
    author and Andr\'es Ib\'a\~nez-N\'u\~nez. 
\end{remark}

\section{One parameter degenerations in algebraic stacks} \label{sec:combinatorial-structures}

In order to study the HN problem, i.e., minimizing a numerical invariant over all filtrations of a point in $\cX$, \cite[\S 3]{halpernleistner2022structureinstabilitymodulitheory} introduces the \emph{degeneration space} $\Deg(\cX,x)$ associated to a point $x \in \cX(\base)$ for some field $\base$. It is constructed as a union of simplices glued together along rational linear maps, and its rational points parameterize filtrations of $x$ up to positive scaling, i.e., precomposition with a ramified covering $(-)^n : \Theta \to \Theta$. There is another (typically much smaller) space called the \emph{component space} $\Comp(\cX)$ and a continuous map $\Deg(\cX,x) \to \Comp(\cX)$ for any $\xi$. A numerical invariant $\nu$ induces a continuous function on $\Comp(\cX)$, and the HN problem is to minimize the pullback of this function to $\Deg(\cX,x)$. The existence of HN filtrations translates into boundedness properties of $\Comp(\cX)$, and the uniqueness of HN filtrations translates into certain convexity properties $\Deg(\cX,x)$ and the function induced by $\nu$.

This section describes work in progress by the second author to generalize these objects. More precise statements will appear with the completed work. We include a sketch here, because it provides a more conceptual understanding of $\Theta$- and $\sS$-completeness, which has been informing our understanding since 2018. Throughout this section, $R$ will denote a complete DVR with field of fractions $K$ and maximal ideal $(\pi)$.

We will introduce the \emph{affine degeneration space}, which is a metric space $\ADeg(\cX,\xi)$ associated to a point $\xi \in \cX(K)$. Its ``rational points'' parameterize all possible extensions of $\xi|_{K[\pi^{1/m}]}$ to $\cX(R[\pi^{1/m}])$ for some $m>0$. Algebro-geometric properties of $\cX$ translate into metric properties of $\ADeg(\cX,\xi)$, and metric geometry can then be used to deduce interesting algebro-geometric facts.

\subsection{The construction}

We regard the following quotient stack as an algebraic model for the standard $n$-simplex $\Delta^n \subseteq \bR^{n+1}$:
\[
\ST_R^n  :=  \Spec \big( R[t_0^r,\ldots,t_n^r, \forall r \in \bQ_{\geq 0}] / (t_0 \cdots t_n - \pi) \big) / \bG_n 
\]
where $\bG_n$ is the affine group scheme which is Cartier dual to the abelian group $W_n \subseteq \bQ^{n+1}$ consisting of tuples such that $r_0 + \cdots + r_n = 0$. Concretely, $\bG_n = \Spec(\bZ[z^r, \forall r \in W_n])$ with the comultiplication ring homomorphism defined by $z^r \mapsto z^r \otimes z^r$.

A morphism $\ST_R^0 \to \cX$ corresponds to a map $\Spec(R[\pi^{1/m}]) \to \cX$ for some $m$. In addition, any rational linear map $\phi: \Delta^m \to \Delta^n$, defined by a matrix with rational coefficients $\phi_{i,j} \geq 0$ such that $\phi_{i,0} + \cdots + \phi_{i,n} = 1, \forall i=0,\ldots,m$, induces a canonical map $\ST_R^m \to \ST_R^n$ given in coordinates by $t^r_j \mapsto t_0^{r \phi_{0,j}} \cdots t_m^{r \phi_{m,j}}$. So we think of a morphism $\ST_R^n \to \cX$ as giving a family of maps from (totally ramified covers of) $\Spec R$ parameterized by rational points of $\Delta^n$.

The open substack where $t_0\cdots t_n \neq 0$ is isomorphic to $\Spec K'$, where $K' = K[\pi^{1/m}, \forall m>0]$, and we call this generic point $\eta$. Given a stack $\cX$ with a point $\xi \in \cX(K)$, we define
\[
\bS(\cX,\xi)_n := \left\{ \begin{array}{c} \sigma : \ST_{R}^{n} \to \cX \text{ with} \\ \text{isomorphism } \sigma(\eta) \simeq \xi|_{K'} \end{array} \right\}.
\]
The \textbf{affine degeneration space, $\ADeg(\cX,\xi)$} is the topological space obtained by gluing together copies of the standard simplices $\Delta^n$ along rational linear maps, with one copy of $\Delta^n$ for each $\sigma \in \bS(\cX,\xi)_n$, in analogy with the geometric realization of a simplicial set. The combinatorial framework for studying $\bS(\cX,\xi)_\bullet$ is a slight modification of the framework of formal fans developed to define the degeneration space $\Deg(\cX,x)$ in \cite[\S 3]{halpernleistner2022structureinstabilitymodulitheory}.

\begin{example}
  When $\cG$ is a reductive group scheme over $R$ (or more generally a Bruhat-Tits group scheme), we expect $\ADeg(B\cG,\xi)$ to be naturally homeomorphic to the Bruhat-Tits building of $\cG_K$. Furthermore, when $\cX = \Spec A/ \cG$ for some affine $\cG$-scheme over $R$, we expect $\ADeg(\cX,\xi)$ to be a closed locally convex subset of this building.
\end{example}

A certain datum on $\cX$, called a \emph{quadratic norm} on graded points \cite[Def.~4.1.12]{halpernleistner2022structureinstabilitymodulitheory}, should canonically define a metric on $\ADeg(\cX,\xi)$. The proposed dictionary between algebro-geometric properties of $\cX$ and metric properties of $\ADeg(\cX,\xi)$ is the following:
\begin{itemize}
  \item The map $\Delta^n \to \ADeg(\cX,\xi)$ corresponding to a $\sigma \in \bS(\cX,\xi)_n$ is totally geodesic, for the usual flat metric on $\Delta^n$.\smallskip
  \item $\sS$-completeness of $\cX$ means that any two points of $\ADeg(\cX,\xi)$ are connected by a unique geodesic line segment.\smallskip
  \item A morphism $\gamma : \Theta_R \to \cX$ with an isomorphism $\gamma(1_K) \cong \xi$ corresponds to a geodesic ray in $\ADeg(\cX,\xi)$.\smallskip
  \item If $\cX$ is $\sS$-complete, $\Theta$-complete, and quasi-compact with affine diagonal, we expect $\ADeg(\cX,\xi)$ to be a Hadamard space, i.e., a complete CAT(0) space.\smallskip
  \item The ideal boundary (see \cite[II.8]{MR1744486} for definition) of $\ADeg(\cX,\xi)$ is naturally homeomorphic to the degeneration space $\Deg(\cX,\xi)$, so its rational points correspond to filtrations $f : \Theta_K \to \cX$ with an isomorphism $f(1)\cong \xi$, up to positive scaling.\smallskip
  \item $\Theta$-completeness means that any point of $\ADeg(\cX,\xi)$ is connected to any boundary point by a unique geodesic ray.\smallskip
\end{itemize}

\subsection{Application to equivariant degenerations}

One of the surprisingly subtle steps in establishing \Cref{thm:existence} was showing the following technical fact: if $V$ is a linear representation of a reductive group $G$, $\xi \in V(R)$, and $g\in G(K)$ is a finite-order element that fixes $\xi$, then there is a finite extension $R \subseteq R'$ with fraction field $K'$ and $h \in G(K')$ such that $h g h^{-1} \in G(R') \subseteq G(K')$ while still having $h \cdot \xi \in V(R') \subseteq V(K')$.

The proof in \cite{ahlh-existence} conceals the conceptual understanding that underlies it, which is based on the idea that $\ADeg(\cX,\xi)$ is a Hadamard space. In fact, if the picture outlined above holds, it implies the following stronger result:
\begin{proposition}\label{P:fixed_point}
    Suppose $\cX$ is a $\Theta$- and $\sS$-complete stack of finite type over a noetherian base\footnote{If the base is not characteristic $0$, then one additionally should assume it is locally reductive in the sense of \cite{ahlh-existence}.} and $\xi \in \cX(R)$. Then for any finite subgroup $\Gamma \subseteq \Aut_\cX(\xi_K)$, there is an element $\xi' \in \cX(R[\pi^{1/m}])$ for some $m>0$ and an isomorphism $\xi_{K[\pi^{1/m}]} \cong \xi'|_{K[\pi^{1/m}]}$ under which
    \[
    \Gamma \subseteq \Aut_{\cX}(\xi|_K) \subseteq \Aut_{\cX}(\xi|_{K[\pi^{1/m}]}) \cong \Aut_{\cX}(\xi'|_{K[\pi^{1/m}]})
    \]
    lies in the subgroup of $R[\pi^{1/m}]$-points of $\Aut_{\cX}(\xi')$, i.e., $\Gamma$ extends from the generic fiber to a group of automorphisms of $\xi'$.
\end{proposition}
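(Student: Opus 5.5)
The plan is to deduce the statement from the Bruhat--Tits fixed point theorem: a finite (indeed bounded) group of isometries of a Hadamard space has a fixed point, namely the circumcenter of any orbit. Assuming the dictionary above holds, $\ADeg(\cX,\xi_K)$ is a Hadamard space, because $\cX$ is $\Theta$- and $\sS$-complete with affine diagonal. For quasi-compactness we may restrict to a quasi-compact open substack containing the image of $\xi$. The given $\xi \in \cX(R)$ defines a rational point $p_\xi \in \ADeg(\cX,\xi_K)$, via the $0$-simplex $\ST^0_R \to \cX$ it induces.

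The first step is to construct an isometric action of $\Aut_\cX(\xi_K)$, and hence of $\Gamma$, on $\ADeg(\cX,\xi_K)$. An element $g$ acts on $\bS(\cX,\xi_K)_n$ by post-composing the identification $\sigma(\eta)\simeq \xi|_{K'}$ with $g$. This commutes with the simplicial structure maps coming from rational linear maps $\Delta^m\to\Delta^n$, so it induces a homeomorphism of the realization. It is an isometry because the metric is defined from the quadratic norm on graded points. That norm depends only on the underlying morphisms $\ST^n_R\to\cX$ and their restrictions to graded points, not on the chosen identification at $\eta$, so $g$ preserves the metric.

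The second step is the fixed point argument. The orbit $\Gamma\cdot p_\xi$ is a finite, hence bounded, set, and in a complete CAT(0) space a bounded set has a unique circumcenter. Uniqueness forces this circumcenter $c$ to be $\Gamma$-fixed. The third step, which is the main obstacle, is to show that $c$ can be taken to be a \emph{rational} point, i.e.\ a point coming from some $\xi' \in \cX(R[\pi^{1/m}])$, since rational points are exactly the $0$-simplices of $\bS(\cX,\xi_K)_\bullet$.

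For this third step I would work inside a single finite subcomplex. Geodesics between rational points lie in simplices given by $\sS$-completeness extensions, which are finite-type data, so the convex hull of the finite orbit is contained in a finite union of simplices glued along rational linear maps. The $\Gamma$-fixed locus is then a nonempty closed convex subset of this polyhedral complex, cut out by rational linear conditions. A nonempty rational polyhedral set contains a rational point. An alternative route avoids metrics entirely by taking a $\Gamma$-invariant rational barycenter in a simplex spanned by the orbit, whose existence again uses $\sS$-completeness to fill simplices.

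Once a $\Gamma$-fixed rational point $\xi'$ is in hand, $\Gamma$-invariance means that for each $g\in\Gamma$ the point $(\xi', g\circ\iota)$ is isomorphic to $(\xi',\iota)$ in $\bS(\cX,\xi_K)_0$. Such an isomorphism is an automorphism of $\xi'$ over $R[\pi^{1/m}]$ restricting generically to $g$, so $\Gamma$ lies in the $R[\pi^{1/m}]$-points of $\Aut_\cX(\xi')$. Uniqueness of extensions, which follows from affine (hence separated) diagonal, makes these lifts unique and therefore compatible with the group law.
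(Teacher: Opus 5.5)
Your main argument is the paper's own: $\Gamma$ acts by isometries on the (conjecturally Hadamard) space $\ADeg(\cX,\xi_K)$. The Bruhat--Tits fixed point theorem, in your version the circumcenter of the finite orbit of the point defined by $\xi$, makes the fixed locus nonempty; since $\Gamma$ preserves the rational simplicial structure, that locus contains a rational point, which you correctly translate into an extension of $\Gamma$ to $R[\pi^{1/m}]$-points of $\Aut_\cX(\xi')$.

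Two asides should be dropped, though neither is load-bearing. First, passing to an open substack need not preserve $\Theta$- and $\sS$-completeness, and it is unnecessary since $\cX$ is of finite type. Second, the ``barycenter'' alternative fails because $\sS$-completeness only supplies edges, not higher simplices: for $B\mathrm{PGL}_2$, where $\ADeg$ should be the Bruhat--Tits tree, an order-$3$ element of $\mathrm{PGL}_2(\base)$ can cyclically permute three neighbours of a vertex, and no simplex spans that tripod.
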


The proof outline is the following: $\Aut_{\cX}(\xi|_K)$ acts naturally by isometries on $\ADeg(\cX,\xi)$, and therefore $\Gamma$ acts on $\ADeg(\cX,\xi)$. The point $\xi'$  corresponds to a rational fixed point for this $\Gamma$-action. The Bruhat-Tits fixed point theorem implies that any bounded group of isometries acting on a non-empty Hadamard space has a fixed point, hence $\ADeg(\cX,\xi)^\Gamma$ is non-empty. Because $\Gamma$ is finite and preserves the ``simplicial'' structure on $\ADeg(\cX,\xi)$, this fixed locus must also be a union of rational simplices, hence it must have a rational point.

\begin{example}
    If $X$ is a $K$-semistable $\bQ$-Fano variety with a finite group of symmetries $\Gamma$, then the proposition implies that it extends to a $\Gamma$-equivariant family of $K$-semistable $\bQ$-Fano varieties over $R$.
\end{example}

An interesting consequence of \Cref{P:fixed_point} is that one can ``trade-off'' between unramified and totally ramified extensions of $R$.

\begin{corollary}
    Suppose $\cX$ is a stack as in \Cref{P:fixed_point} and $\xi \in \cX(K)$ is such that after an unramified separable extension $K \subseteq K'$, $\xi$ extends to a point over the valuation ring $R' \subseteq K'$. Then $\xi|_{K[\pi^{1/m}]}$ extends to a point over $R[\pi^{1/m}]$ for some $m>0$.
\end{corollary}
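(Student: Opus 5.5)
We reduce to \Cref{P:fixed_point} by Galois descent. Since $R$ is complete, we may assume $K \subseteq K'$ is finite, unramified and Galois with group $\Gamma$; replacing $K'$ by its Galois closure keeps it unramified. Then $R'$ is finite étale over $R$ with $(R')^\Gamma = R$, and $\pi$ is still a uniformizer of $R'$. The plan is to make $\Gamma$ act on the extended point, descend along $\Spec R' \to \Spec R$, and pay for this with a totally ramified extension $R[\pi^{1/m}]$.

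First we pass to the Weil restriction stack $\cX' := \mathrm{Res}_{R'/R}(\cX_{R'})$, or equivalently work $\Gamma$-equivariantly on $\cX_{R'}$. We need that $\cX'$ is again of finite type, $\Theta$-complete and $\sS$-complete. This should hold because $\Theta_R \times_R R' \cong \Theta_{R'}$ and $\ST_R \times_R R' \cong \ST_{R'}$ are compatible with the punctured opens, and unique extension over each factor gives unique extension for the restriction. The datum $\xi$ defines a $K$-point $\hat\xi$ of $\cX'$, namely $\xi|_{K'}$ viewed as a $K$-point of $\mathrm{Res}$. The hypothesis says the $K$-point underlying $\hat\xi$ extends to an $R$-point of $\cX'$, namely the given extension of $\xi_{K'}$ over $R'$.

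Next we inject the Galois symmetry. The descent datum of $\xi|_{K'}$ relative to $K'/K$ gives a finite group $\Gamma$ acting on $\hat\xi$, but this action is semilinear. The cleaner route is to apply the argument behind \Cref{P:fixed_point} directly: $\Gamma$ acts by isometries on $\ADeg(\cX_{R'},\xi_{K'})$ through the composite of the descent isomorphisms $\gamma^*\xi_{K'} \cong \xi_{K'}$ with the Galois action on $\ST^n_{R'}$. This action preserves the simplicial structure because $\Gamma$ acts trivially on the coordinates $t_j$ and on $\pi$. The space is non-empty, since the extension over $R'$ gives a vertex, and it is Hadamard by $\Theta$- and $\sS$-completeness. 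The Bruhat--Tits fixed point theorem then gives a fixed point. By finiteness of $\Gamma$ and the rational simplicial structure, there is a rational fixed point, i.e.\ an extension $\xi'' \in \cX(R'[\pi^{1/m}])$ of $\xi_{K'[\pi^{1/m}]}$ that is $\Gamma$-invariant up to isomorphism.

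Finally we descend. $R'[\pi^{1/m}] = R' \otimes_R R[\pi^{1/m}]$ is Galois étale over $R[\pi^{1/m}]$ with group $\Gamma$. We must upgrade "invariant up to isomorphism" to an actual descent datum, i.e.\ choose isomorphisms $\phi_\gamma \colon \gamma^*\xi'' \cong \xi''$ satisfying the cocycle condition. On the generic fiber these are the given descent isomorphisms of $\xi_{K'}$, and the task is to show they extend over $R'[\pi^{1/m}]$. This is exactly where the full strength of \Cref{P:fixed_point} enters: the group of semilinear automorphisms of $\xi_{K'}$ generated by the descent data is finite, and the fixed-point argument, applied to this finite group acting on $\ADeg$, guarantees the isomorphisms extend integrally. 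Since $\cX$ has affine, in particular separated, diagonal, these extensions are unique, so the cocycle condition holds because it holds generically. Effective descent for stacks along the finite étale cover then produces a point over $R[\pi^{1/m}]$ extending $\xi|_{K[\pi^{1/m}]}$.

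The main obstacle is this last step: making sure the fixed point in $\ADeg$ carries a genuine descent datum rather than only an abstract isomorphism class. I would handle it by phrasing everything on the Weil restriction $\cX'$, where the semilinear descent data become a finite group of honest automorphisms of $\hat\xi$, and then quoting \Cref{P:fixed_point} verbatim for $\cX'$.
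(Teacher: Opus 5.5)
Your overall route matches the paper's: reduce to a Galois extension with group $\Gamma$, let $\Gamma$ act on $\ADeg(\cX,\xi|_{K'})$, get a fixed point from Bruhat--Tits, pass to a rational fixed point, and descend. The paper compresses the last step into the assertion that $\ADeg(\cX,\xi)$ \emph{is} the $\Gamma$-fixed locus.

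The gap is in the step you single out as the main obstacle, and the fix you propose for it fails. On $\cX'=\mathrm{Res}_{R'/R}(\cX_{R'})$, $\Gamma$ acts on the stack itself, since each $\gamma$ induces an $R$-automorphism $\gamma\colon\cX'\to\cX'$ through its action on $R'$. The descent data of $\xi$ are isomorphisms $\gamma(\hat\xi)\cong\hat\xi$. They are not elements of $\Aut_{\cX'}(\hat\xi)=\Aut_{\cX}(\xi|_{K'})$. So there is no finite subgroup of $\Aut_{\cX'}(\hat\xi)$ to which \Cref{P:fixed_point} could be applied verbatim. Even if there were, its conclusion would not be a descent datum: it gives an $R[\pi^{1/m}]$-point of $\cX'$, i.e.\ an $R'[\pi^{1/m}]$-point of $\cX$, on which $\Gamma$ acts by linear automorphisms. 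For the same reason, appealing to the ``full strength'' of \Cref{P:fixed_point} for the semilinear group generated by the descent data is unjustified. That proposition concerns subgroups of $\Aut_\cX(\xi|_K)$, and the paper's proof does not use it at all, only the Bruhat--Tits theorem for the Galois action.

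The missing idea is that no upgrade from ``$\Gamma$-invariant up to isomorphism'' is needed, because points of the affine degeneration space are rigidified. An element of $\bS(\cX,\xi|_{K'})_n$ is a pair $(\sigma,\alpha)$ with $\sigma\colon\ST^n_{R'}\to\cX$ and $\alpha\colon\sigma(\eta)\simeq\xi$. The element $\gamma$ acts by $(\sigma,\alpha)\mapsto(\gamma^*\sigma,\phi_\gamma\circ\gamma^*\alpha)$, where $\phi_\gamma\colon\gamma^*(\xi|_{K'})\cong\xi|_{K'}$ is the canonical descent isomorphism. A $\Gamma$-fixed rational point is therefore, by definition, an extension $\xi''$ together with isomorphisms $\beta_\gamma\colon\gamma^*\xi''\cong\xi''$ whose generic restrictions are the $\phi_\gamma$ (transported by $\alpha$). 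Since the diagonal of $\cX$ is separated (as you note) and the generic point is schematically dense, these $\beta_\gamma$ are unique. Hence the cocycle condition holds automatically, and descent along $\ST^0_{R'}\to\ST^0_R$, followed by a limit argument, gives a point over some $R[\pi^{1/m}]$. This is exactly the identification of $\ADeg(\cX,\xi)$ with the fixed locus used in the paper. With this observation, your third paragraph already completes the proof, and you can drop the Weil restriction, whose $\Theta$- and $\sS$-completeness you only sketched.
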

\begin{proof}
    It suffices to assume $K \subseteq K'$ is Galois, with Galois group $G$. In this case $\ADeg(\cX,\xi)$ is the fixed locus for the action of $G$ on $\ADeg(\cX,\xi|_{K'})$, which is non-empty by the Bruhat-Tits fixed point theorem.
\end{proof}

This is a soft argument for a somewhat weaker version of the recent result in \cite{bejleri2025rootstackvaluativecriterion} that a good moduli space morphism $\cX \to X$ satisfies the existence part of the valuative criterion for properness if one only allows root stacks $\sqrt[n]{\Spec R} \to \Spec R$ as opposed to arbitrary finite extensions of $R$.

\subsection{Relationship with analytic geometry}

Berkovich analytic geometry provides another perspective on the question of classifying the extensions of a $K$-point of $\cX$ to an $R$-point. There are two constructions of $K$-analytic stacks associated to an algebraic stack $\cX$ over $R$: one can consider the analytification of the generic fiber $(\cX_K)^{an}$, or one can consider the ``generic fiber of the formal stack'' $(\cX^\wedge)_\eta$.\footnote{Both constructions are left Kan extensions of the corresponding functor from the category of finite type affine $R$-schemes to the category of finite type algebraic $R$-stacks with affine diagonal.} There is a canonical map $(\cX^\wedge)_\eta \to (\cX_K)^{an}$ which is an embedding of a compact analytic domain if $\cX$ is a separated scheme \cite{MR3330762}. In general, we define $\ADeg(\cX,\xi)^{an}$ to be the fiber of this map over the point in $(\cX_K)^{an}$ associated to $\xi \in \cX(K)$. There is a natural continuous morphism $\ADeg(\cX,\xi) \to \ADeg(\cX,\xi)^{an}$, and we expect this to be a homeomorphism under certain conditions.

From this perspective, the previous section implies there should be natural metrics on the fibers of the canonical map $(\cX^\wedge)_\eta \to (\cX_K)^{an}$, and that they are CAT(0) spaces (and perhaps Hadamard spaces) when $\cX$ has a separated good moduli space. Our evidence for this picture is the well-known relationship between $K$-analytic geometry and Bruhat-Tits buildings \cite{MR756316,MR1070709,remy2010bruhat,remy2012bruhat}. In the case of a reductive group scheme $G$ over $R$, the affine building associated to the generic fiber $\cB(G_K,K)$ naturally embeds in the non-archimedean homogeneous space $(G_K)^{an} / (G^\wedge)_\eta$, which is the fiber of $(BG)^\wedge_\eta \to (BG_K)^{an}$. 

\section{Moduli of singular curves and the MMP for $\bar{M}_g$} \label{sec:singular-curves}

While $\bar{M}_g$ is rational for small $g$, it is of general type for $g \ge 24$ by the
celebrated theorems of Harris--Mumford \cite{harris-mumford-mg} and Eisenbud--Harris
\cite{eisenbud-harris-mg}.  It follows from the equally celebrated results of
Birkar--Cascini--Hacon--McKernan \cite{bchm} that the canonical model $\bar{M}_g^{\can} := \Proj
\bigoplus_{d \ge 0} \H^0(\bar{\cM}_g, K_{\bar{\cM}_g}^{\tensor d})$ is a projective variety
birational to $\bar{M}_g$.  

\begin{question}[Hassett--Keel]
  Does $\bar{M}_g^{\can}$ have a modular interpretation?
\end{question}

In an effort to answer this question, Hassett proposed in \cite{hassett-genus2} to interpolate
between $\bar{M}_g$ and $\bar{M}_g^{\can}$ by considering the log canonical models
\begin{equation} \label{eqn:log-canonical-model}
  \bar{M}_{g}(\alpha):=
  \Proj \bigoplus_{d \geq 0} \H^0(\bar{\cM}_{g}, \lfloor d(K_{\bar{\cM}_{g}}+\alpha\delta) \rfloor),
\end{equation}
where $\delta$ is the boundary divisor. 
Note that
$\bar{M}_g = \bar{M}_g(1)$ (since $K + \delta$ is ample on $\bar{M}_g$) while
$\bar{M}_g^{\can} = \bar{M}_g(0)$.  This leads to the following refinement of the above question:

\begin{question} \label{ques:logMMP}
  For each $g$, determine the critical values  $1 > \alpha_1 > \alpha_2 > \cdots > \alpha_n \ge 0$
  where  $\bar{M}_g(\alpha)$ changes, provide a modular interpretation for each
  $\bar{M}_g(\alpha_i)$, and describe the geometry of the birational maps $\bar{M}_g(\alpha_i)
  \dashrightarrow \bar{M}_g(\alpha_{i+1})$ in terms of the moduli descriptions.
\end{question}

Except for phenomenon in low genus, the first three critical values are $9/11$, $7/10$, and $2/3$, and in particular independent of $g$. 
\begin{question} 
  Are the critical values $\alpha_i$ independent of $g$?
\end{question}

\subsection{Line bundles on $\bar{\cM}_g$} \label{subsec:line-bundles-on-mgbar}

There are several geometric ways to define line bundles on $\bar{\cM}_g$.  First, there is the
canonical line bundle $K$ and the boundary divisors $\delta_i$:
$$\begin{aligned}
K 
  & := K_{\bar{\cM}_g} = \det \Omega_{\bar{\cM}_g}\\
\delta_i 
  & :=  \{ [C] \in \bar{\cM}_g \mid 
      \text{$C$ is a nodal union of genus $i$ and genus $g-i$ curves } \} \\
\delta_0 
  & := \overline{ \{ [C] \in \bar{\cM}_g \mid \text{$C$ is irreducible with a node } \} }\\
\delta 
  & := \delta_0 + \delta_1 + \cdots + \delta_{\lfloor g/2 \rfloor} = 
   \{ [C] \in \bar{\cM}_g \mid \text{$C$ is singular} \} .
\end{aligned}$$
If $\pi \co \cU_g \to \bar{\cM}_g$ denotes the universal family,  then we can define the closely
related line bundles 
\begin{equation} \label{eqn:lambdak}
  \lambda_k  := \det \pi_* (\omega_{\cU_{g}/\bar{\cM}_{g}}^{\tensor k}) \qquad \text{and} \qquad
 \Lambda_k := \det \R \pi_* (\omega_{\cU_{g}/\bar{\cM}_{g}}^{\tensor k}), 
\end{equation}
where $\lambda := \lambda_1 = \Lambda_0 = \Lambda_1$ is called the \emph{Hodge line bundle} and
$\lambda_k = \Lambda_k$ for $k \ge 2$. The Knudsen--Mumford formula 
implies that there are line bundles $M_0$, $M_1$, and $M_2$ on $\bar{\cM}_g$ such that $\Lambda_k =
{k \choose 2} M_2 + k M_1 + M_0$.  In particular, this allows us to write $\lambda_k =
(6k^2-6k+1)\lambda - {k \choose 2} \delta$.  We define the \emph{Chow--Mumford} or \emph{CM line
bundle} as the leading coefficient
$L_{\CM} := M_2$.
Grothendieck--Riemann--Roch calculations imply that $K = 13 \lambda - 2 \delta$, $M_1 = 0$, and
$$L_{\CM} = M_2 = \pi_*(c_1(\omega_{\cU_{g}/\bar{\cM}_{g}})^2),$$
where the latter cohomology class is commonly referred to as the $\kappa$-class. 

It is a theorem of Harer (analytic) or Arbarello--Cornalba (algebraic) that $\Pic(\bar{\cM}_g)$ is
freely generated over $\bZ$ by $\lambda$ and the $\delta_i$ for $i \le \lfloor g/2 \rfloor$.  
Restricting to the
$\lambda\delta$-plane, we write a line bundle (up to a positive scalar multiple) as $s \lambda -
\delta$, where $s$ is referred to as 
the \emph{slope}.  This ray in the plane can be written equivalently as
 $K + \alpha \delta$, where $s = 13/(2-\alpha)$.  In the $\lambda\delta$-plane, the ample cone of $\bar{M}_g$ is known, but it is open problem to determine the effective cone; see \Cref{fig:effective-ample-cone}.  

\begin{figure}[h!]
    \centering
    \labellist
      \pinlabel {\tiny{$\alpha$-value}}  at 66 775
      \pinlabel {\tiny{slope $s$}}  at 66 755
      
      \pinlabel {\tiny{13}} at 115 755
      \pinlabel {\tiny{12}} at 220 755
      \pinlabel {\tiny{11.2}} at 290 755
      \pinlabel {\tiny{11}} at 340 755
      \pinlabel {\tiny{10}} at 370 755
      \pinlabel {\tiny{$\frac{39}{4}$}} at 400 755
      \pinlabel {\tiny{$\frac{8g+4}{g}$}} at 427 755
      \pinlabel {\tiny{$\frac{13}{2}$}} at 455 755
      \pinlabel {\tiny{slope($\bar{M}_g$)}} at 490 755
      
      \pinlabel {\tiny{1}} at 115 775
      \pinlabel {\tiny{$\frac{11}{12}$}} at 220 775
      \pinlabel {\tiny{$\frac{47}{56}$}} at 290 775
      \pinlabel {\tiny{$\frac{9}{11}$}} at 340 775
      \pinlabel {\tiny{$\frac{7}{10}$}} at 370 775
      \pinlabel {\tiny{$\frac{2}{3}$}} at 400 775
      \pinlabel {\tiny{$\frac{3g+8}{8g+4}$}} at 427 775
      \pinlabel {\tiny{$0$}} at 455 775

      \pinlabel {$\lambda$}  at 70 730    
      \pinlabel {$\delta$}  at 180 503
      \pinlabel {$-\delta$}  at 510 725
      
      \pinlabel {\tiny{effective cone}}  at 140 565
      \pinlabel {\tiny{ample cone}}  at 250 631
      
      \pinlabel {\tiny{$\lambda_2$}} at 150 677
      \pinlabel {\tiny{$K+\delta$}} at 144 697        
      \pinlabel {\tiny{$\lambda_3$}} at 170 677
      \pinlabel {\tiny{$\lambda_k$}} at 205 677
      \pinlabel {\tiny{$L_{\CM}$}} at 235 677
      \pinlabel {\tiny{$L_{k}^{\Ch}$}} at 248 700
      \pinlabel {\tiny{$L_{6}^{\Ch}$}} at 269 700
      \pinlabel {\tiny{$L_{5}^{\Ch}$}} at 290 700
      \pinlabel {\tiny{$L_{5,m}^{\Hilb}$}} at 302 677
      \pinlabel {\tiny{$L_{4}^{\Ch}$}} at 313 700
      \pinlabel {\tiny{$L_{3}^{\Ch}$}} at 331 700
      \pinlabel {\tiny{$L_{2}^{\Ch}$}} at 350 700
      \pinlabel {\tiny{$L_{1}^{\Ch}$}} at 381 700
      \pinlabel {\tiny{$K$}} at 400 700
    \endlabellist
    \includegraphics[width=\textwidth]{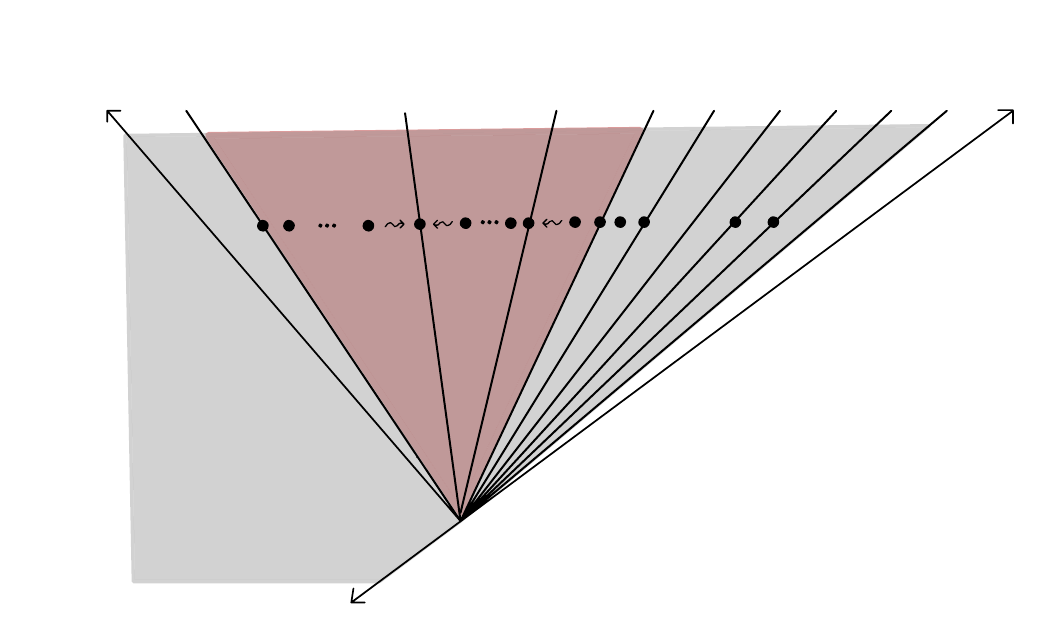}
    \caption{The effective and ample cone in the $\lambda\delta$-plane}
    \label{fig:effective-ample-cone}
\end{figure}

The GIT construction of $\bar{M}_g$ proceeds by choosing an integer $k$ and viewing a stable curve
$C$ under its $k$th-pluricanonical embedding $|\omega_C^{\tensor k}| \co C \into \bP^N$ after
choosing a basis of $\H^0(C, \omega_C^{\tensor k}) \cong \bC^{N+1}$. This embedded curve can be
viewed either as element of  $\Chow(\bP^N)$, which has a canonical projective embedding, or as
element of $\Hilb^P(\bP^N)$ (with $P$ the Hilbert polynomial), where a choice of $m \gg 0$ defines
an embedding into 
$\Gr\big(P(m), \Gamma(\bP^{N}, \oh(m))\big)$ via 
$[C \subseteq \bP^N] \mapsto 
  \big[\Gamma(\bP^{N}, \oh(m))
  \mapsonto \Gamma(C, \oh(m))\big]$
and into a projective space after composing with the Plücker embedding.  While
$\omega_C^{\tensor k}$ is very ample for $k \ge 3$, the GIT construction of $\bar{M}_g$ works
only for $k \ge 5$.  

The GIT construction yields ample line bundles $L^{\rm Ch}_{k}$ and $L^{\Hilb}_{k,m}$ for $k \ge 5$
and $m \gg 0$.\footnote{On the other hand, Kollár's construction \cite{kollar-projectivity} yields
the ampleness of $\lambda_k$ for $k \ge 6$.}   
For other values of $k$ and $m$, GIT constructs a projective variety birational to
$\bar{M}_g$.  The classes of these line bundles
are proportional to 
$$\begin{aligned}
	L^{\rm Ch}_{k} & \sim  \left\{
		 \begin{array}{ll}
			(8 + \frac{4}{g}) \lambda - \delta & \text{if } k = 1\\
		  (12-\frac{4}{k}) \lambda - \delta & \text{if } k > 1
		 \end{array} \right. \\
  L^{\Hilb}_{k,m} & \sim  \left\{
		 \begin{array}{ll}
			\big(\frac{2+(m-1)((8g+4)m-2g+2))}{gm}\big) \lambda - \delta & \text{if } k = 1\\
		  \big(\frac{12mk^2-4mk - 4k +2}{mk^2}\big)\lambda - \delta & \text{if } k > 1.
		 \end{array} \right. \\
\end{aligned}$$
For fixed $k$, $\lim_{m \to \infty} L^{\Hilb}_{k,m} \sim L^{\rm Ch}_k$, while 
$\lim_{k \to \infty} L^{\rm Ch}_k \sim L_{\CM}$; see \cite[Thm.~5.15]{mumford-git-stability},\
\cite[\S 1.5]{viehweg}, and \cite[\S2]{cornalba-harris}.

In the same paper \cite{mumford-git-stability} where Mumford constructed $\bar{M}_g$ using GIT on
the Chow scheme, he also observed that alternative GIT setups provide birational models of
$\bar{M}_g$ parameterizing curves with singularities that are possibly worse than nodal. 
Specifically, in genus $3$, he constructed a rational map 
$\bar{M}_3 \dashrightarrow \bP(\Sym^4(\bC^3))\gitq \SL_3$ 
contracting curves with \emph{elliptic tails} (i.e., a nodally attached genus $1$ curve) to
cuspidal curves, mapping \emph{elliptic bridges} (i.e., nodal union of two genus one curves along
two points) to tacnodal curves, and contracting the hyperelliptic locus to a point, which is
identified with the double conic. The MMP for low genus for $g \le 4$ is now complete:   $g=2$
\cite{hassett-genus2}, $g=3$ \cite{hyeon-lee-genus3}, and $g=4$ \cite{adlw-genus4}. 
 
 In arbitrary genus, Hassett and Hyeon construct the first divisorial contraction
 $\bar{M}_g \to \bar{M}_g(9/11)$ contracting the locus of elliptic tails
 \cite{hassett-hyeon-contraction} and  the first flip 
  $$\bar{M}_g(9/11) \to \bar{M}_g(7/10) \leftarrow  \bar{M}_g(7/10 - \epsilon)$$ 
for $\epsilon \ll 1$, flipping the locus of elliptic bridges to tacnodal curves
\cite{hassett-hyeon-flip}. For a detailed survey of GIT approaches to the log MMP, we recommend
\cite{fedorchuk-smyth-handbook}.

\subsection{An intrinsic approach} \label{sec:an-intrinsic-approach}
One non-GIT approach to \Cref{ques:logMMP} is to use heuristics to define a moduli stack
$\bar{\cM}_g(\alpha)$ of curves and prove that it admits a good moduli space 
$\bar{\cM}_g(\alpha) \to \bar{M}_g(\alpha)$, where $\bar{M}_g(\alpha)$ is the projective log 
canonical model defined in \eqref{eqn:log-canonical-model}.  Several heuristics can guide the search
for $\bar{\cM}_g(\alpha)$:
{ \setlength{\leftmargini}{10pt}
\begin{itemize}
  \item \emph{Openness:} $\bar{\cM}_g(\alpha)$ should be an \emph{open} substack of the
  stack of all curves.\footnote{It turns out that the stack of all curves has many open substacks
  with proper coarse or good moduli spaces, e.g., see \cite{smyth-compactifications}.  This is very
  much in contrast to the stack $\cCoh_{r,d}(C)$ of coherent
  sheaves on a curve, where  the only open substack with a proper good moduli space
  is $\cBun_{r,d}^{\ss}$ \cite{fh-w-z}.}
  \item \emph{Semistable reduction}: $\bar{\cM}_g(\alpha)$ should satisfy the existence part of the
  valuative criterion for properness. 
  \item \emph{Character theory:}  Since the line bundle $K+\alpha \delta$ should descend to an ample
  line bundle on $\bar{M}_g(\alpha)$, the action of $\Aut(C)$ on the fiber $(K+\alpha
  \delta)|_{[C]}$ should be trivial for each curve $[C] \in \bar{\cM}_g(\alpha)$.  Therefore, for
  particular singular curves $C$ with a $\bG_m$-action, one can determine precisely which values of
  $\alpha$ for which $C$ can appear in $\bar{\cM}_g(\alpha)$.  
  \item \emph{Intersection theory:} For special curves $T \subseteq \bar{\cM}_g$, one can compute
  precisely the value of $\alpha$ for which $(K+\alpha \delta) \cdot T = 0$.  At this critical
  value, $T$ must be contracted. 
  \item \emph{Inductive structure:} For each critical value $\alpha_c$, there should be open
  substacks 
  $\bar{\cM}_g(\alpha_c + \epsilon) 
    \into \bar{\cM}_g(\alpha_c) 
    \hookleftarrow \bar{\cM}_g(\alpha_c - \epsilon)$.
\end{itemize}}
\noindent 
See \cite{afs-predictions} for predictions of $\bar{\cM}_g(\alpha)$ using these heuristics.

Given a good candidate for $\bar{\cM}_g(\alpha_c)$ at a critical value $\alpha_c$, the next step is
to construct \'etale VGIT (variation of GIT) presentations.  Specifically, for each closed
point $[C] \in \bar{\cM}_g(\alpha_c)$, one constructs an \'etale neighborhood
$U/\Aut(C) \to \bar{\cM}_g(\alpha_c)$ of $[C]$, where $U$ is affine and the preimages of
$\bar{\cM}_g(\alpha_c \pm \epsilon)$ are identified with the semistable locus of $[U/\Aut(C)]$ with
respect to the character $\pm (K+\alpha_c \delta)|_{[C]}$.  The existence of a good moduli space for
$\bar{\cM}_g(\alpha_c)$ then follows inductively from the existence of good moduli spaces of
$\bar{\cM}_g(\alpha_c + \epsilon)$ and good moduli spaces of \emph{both} of the closed substacks 
$\bar{\cM}_g(\alpha_c) \minus \bar{\cM}_g(\alpha_c \pm \epsilon)$ \cite[Thm.~1.3]{second-flip2}. 
Establishing projectivity is (as usual) the most intricate step, requiring an analysis of the
intersection-theoretic properties of $K+\alpha \delta$.

This approach was developed  in  \cite{second-flip1,second-flip2,second-flip3} and applied to
construct the second flip  
$$\bar{M}_g(7/10-\epsilon)=\bar{M}_g(2/3+\epsilon) 
  \to \bar{M}_g(2/3) \leftarrow  \bar{M}_g(2/3 - \epsilon),$$ 
replacing genus 2 Weierstrass tails (i.e., curves with a genus $2$ subcurve meeting the rest
of the curve in a Weierstrass point) with ramphoid cusps.  It was also applied in \cite{ctv-mmp1}
and \cite{ctv-mmp2} to show for any subset $T \subseteq \{\irr, 2, \ldots, \lfloor g/2 \rfloor\}$,
there is an open substack $\bar{\cM}_g^T \subseteq \cM_g^{\all}$ admitting a projective good
moduli space $\bar{M}_g^T$ with birational maps 
$$\bar{M}_g \to \bar{M}_g(9/11) \xto{f_T} \bar{M}_g^T,$$
where $f_T$ contracts the $K$-negative face of the Mori cone $\bar{\NE}(\bar{M}_g(9/11))$
corresponding to $T$, namely non-separating elliptic bridges if $\irr \in T$ and genus $i$
separating elliptic bridges if $i \in T$.  When $T = \{\irr, 2, \ldots, \lfloor g/2 \rfloor\}$,
$\bar{M}_g^T = \bar{M}_g(7/10)$.  Moreover, they also identify $\bar{M}_g^T$ with the
projective model corresponding to certain adjoint line bundles 
$K_{\bar{M}_g} + \alpha_{\irr} \delta_{\irr} + \sum_{i} \alpha_i \delta_i$.
This provides a partial Shokuruv decomposition of
$\bar{\NE}(\bar{M}_g)$ in the boundary of the regions defining $\bar{M}_g$ and $\bar{M}_g(9/11)$.

\begin{problem}
  Work out the complete Shokuruv decomposition of $\bar{\NE}(\bar{M}_g(9/11))$ corresponding to
  birational models with tacnodal modular interpretations.
\end{problem}

\begin{problem}
  Work out the third flip at $19/29$ and extend the Shokuruv decomposition.
\end{problem}

While the above approach could potentially resolve these problems, the combinatorics of curve types
becomes increasingly intricate, especially for $\alpha \le 5/9$. To extend the log MMP of
$\bar{M}_g$ significantly further, a more intrinsic method based on the Beyond GIT framework may be
required.

\subsection{Beyond GIT for prestable curves}
We discuss the stack $\cM_g^{\all}$ of all curves in  \S \ref{subsec:stack-of-all-curves} and the
challenges of applying the Beyond GIT framework in \S \ref{subsec:beyond-git-curves}. For the
moment, we will explore $\Theta$-semistability conditions on the open  substack $\cM_g^{\pre}
\subseteq \cM_g^{\all}$ of prestable curves, i.e. connected nodal curves. 

The stack $\cM_g^{\pre}$, is rather poorly behaved.  It is not a global
quotient stack, does not  have quasi-affine diagonal, and does not contain a bounded 
open substack with a good moduli space other than $\bar{\cM}_g$; see \cite[\S
2.2]{alper-kresch}.  
Let $\cM_{g}^{\ss}$ denote the open substack of
semistable curves, i.e., connected nodal curves without nodally attached smooth rational tails.  There are open immersions
$$\bar{\cM}_g \subseteq \cM_g^{\ss} \subseteq \cM_g^{\pre},$$
where the complement of the first immersion is codimension $2$ and the second complement
is a divisor $\delta^{\pre} := \cM_{g}^{\pre} \setminus \cM_{g}^{\ss}$.
Let $\st \co \cM_{g}^{\pre} \to \bar{\cM}_g$ denote the stabilization morphism. 
In connection to \Cref{conj:stack-of-all-curves}(\ref{conj:stack-of-all-curves-3}), it is
not hard to see that the composition $\bar{\cM}_g^{\pre} \to \bar{\cM}_g \to \bar{M}_g$ 
is a categorical moduli space.

\begin{theorem}  Let $A \in \Pic(\bar{\cM}_g)$.
  \begin{enumerate}
    \item If $\cM = \cM_g^{\ss}$ and $\cL = \st^*(A)$, then $\cM^{\cL-\ss} = \cM_g^{\ss}$.
    \item If $\cM = \cM_g^{\pre}$ and $\cL = \st^*(A) + \beta \oh(\delta^{\pre})$, then
    $$ \cM^{\cL-\ss} = \left\{
      \begin{array}{ll}
        \cM_g^{\ss} & \text{if $\beta > 0$}\\
        \cM_g^{\pre} & \text{if $\beta = 0$}\\
        \emptyset & \text{if $\beta < 0$.}
      \end{array} \right. 
    $$
  \end{enumerate}
\end{theorem}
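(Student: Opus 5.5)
The plan is to compute, for an arbitrary filtration $f\colon \Theta_K \to \cM$ with $f(1)\cong x$, the weights of the two building blocks $f^\ast\st^\ast(A)$ and $f^\ast\oh(\delta^{\pre})$ separately. I will then exhibit explicit destabilizing filtrations where needed. Throughout I use the convention of the Remark following \Cref{D:theta_stability}: $f$ is destabilizing for $\cL$ exactly when $\Gamma(\Theta_K, f^\ast\cL)=0$. Equivalently, $\wt(f^\ast\cL)$ is read off from the character by which $\bG_m$ acts on the fiber $\cL|_{f(0)}$.

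\emph{Step 1: $\wt(f^\ast\st^\ast A)=0$ for every $f$.} The composite $\st\circ f\colon\Theta_K\to\bar{\cM}_g$ lands in a separated Deligne--Mumford stack. The weight is the character by which $\bG_m\subseteq\Aut(f(0))$ acts on the fiber of $\st^\ast A$ at $f(0)$, and this action factors through $\bG_m\to\Aut_{\bar\cM_g}(\st(f(0)))$. The target group is finite, so the homomorphism from the connected group $\bG_m$ is trivial, and hence so is the character. Consequently every point of $\cM_g^{\ss}$ is $\st^\ast(A)$-semistable, which proves (1). It also reduces (2) to computing $\beta\cdot\wt(f^\ast\oh(\delta^{\pre}))$. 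Here $\delta^{\pre}$ is Cartier because $\cM_g^{\pre}$ is smooth. Locally, $\delta^{\pre}$ is cut out by the product of the smoothing parameters $t_q\in T_q C'\otimes T_q R$ of the nodes $q$ attaching rational tails $R$.

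\emph{Step 2: the case $\beta>0$ and semistable $x$, and the case $\beta=0$.} If $x\in\cM_g^{\ss}$, then $f(1)\notin\delta^{\pre}$. The canonical section $s_{\delta^{\pre}}$ therefore pulls back to a $\bG_m$-invariant section of $f^\ast\oh(\delta^{\pre})$ that does not vanish at $1$, so $\Gamma(\Theta_K, f^\ast\oh(\delta^{\pre})^{\otimes\beta})\neq0$ for $\beta>0$. This gives weight $\ge 0$, so $x$ is semistable. For $\beta=0$ the statement is exactly Step 1.

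\emph{Step 3: destabilizing filtrations.} If $x$ has a rational tail $R$ attached at a node $q$, take the isotrivial filtration in which $\bG_m$ acts on $R\cong\bP^1$ fixing $q$ with weight $\pm1$ and acts trivially elsewhere. Then $f(0)\cong f(1)\cong x$. The weight of $\oh(\delta^{\pre})$ equals the weight on $T_qR$, which is $\pm1$ (the smoothing parameters of the other tails have weight $0$). Choosing the sign makes $\beta\cdot\wt<0$, so $x$ is unstable whenever $\beta\ne0$.

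For $\beta<0$ I also need to destabilize every $x$, stable ones included. I will take the family $C\times\bA^1$ with trivial action on $C$ and the standard action on $\bA^1$, and blow up the fixed point $(p,0)$ for a smooth point $p$. The central fiber is $C\cup E$ with $E$ an exceptional rational tail. This gives $f\colon\Theta\to\cM_g^{\pre}$ with $f(1)=x$ and $f(0)$ acquiring a new rational tail. The new node's smoothing parameter has nonzero weight, and all older tails contribute weight $0$. Then $\beta\cdot\wt(f^\ast\oh(\delta^{\pre}))<0$, so $x$ is unstable, i.e.\ $\cM^{\cL-\ss}=\emptyset$.

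The main obstacle is the sign bookkeeping in this last construction. One must check that the weight of $T_qC\otimes T_qE$ at the new node has the sign opposite to the one forced in Step 2; it must, since $s_{\delta^{\pre}}$ pulls back to an invariant section vanishing at $0$ when $x\notin\delta^{\pre}$. One must also verify that the blown-up family is flat and prestable.
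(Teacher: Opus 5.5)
The paper states this theorem without proof; it only adds that the semistable loci are open but do not admit good moduli spaces. So there is no argument of the authors to compare against. Your proof is correct and is the natural one:
\begin{itemize}
\item $\wt(f^\ast\st^\ast A)=0$, because the stabilizer $\bG_m$ must map trivially to the finite group $\Aut(\st(f(0)))$.
\item Everything therefore reduces to the sign of $\wt(f^\ast\oh(\delta^{\pre}))$.
\item That weight is $\ge 0$ whenever $f(1)\notin\delta^{\pre}$, by the pulled-back canonical section.
\item It can be made $\pm 1$ at will by an isotrivial filtration rotating a rational tail.
\item It is forced to be $>0$ by your blow-up degeneration of a semistable curve.
\end{itemize}

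There are two small points to correct.

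First, your local description of $\delta^{\pre}$ is not quite right. Near $x$, it is the reduced normal crossings divisor $\prod_{q'} t_{q'}=0$, where $q'$ runs over all nodes that separate off a connected subcurve of arithmetic genus $0$, not only the nodes attaching rational tails. For example, take $x=C'\cup_{q_1}R_1\cup_{q_2}R_2$ with a chain of two rational curves. Smoothing only $q_1$ or only $q_2$ still leaves a rational tail, so the local equation is $t_{q_1}t_{q_2}$, even though $R_1$ is not a tail. This does not affect your Step 3:
\begin{itemize}
\item In the isotrivial filtration, every node $q'\neq q$ lies only on components where $\bG_m$ acts trivially, since $R$ contains no node other than $q$. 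So the weight is still $\pm1$.
\item When $C$ is semistable, the new node $q$ is the only node of $C\cup_q E$ separating off a genus-$0$ subcurve. Any other such node would produce a rational tail of $C$.
\end{itemize}

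Second, the ``main obstacle'' you flag is already settled by your own remark. The pullback $f^\ast s_{\delta^{\pre}}$ is a nonzero $\bG_m$-invariant section of $\oh_\Theta\langle w\rangle$ that vanishes at $0$, and this alone forces $\wt>0$ with no local sign computation. Flatness and prestability are also immediate:
\begin{itemize}
\item The blown-up total space is reduced and has no component supported over $0$, so it is flat over $\bA^1$.
\item The central fiber is the reduced nodal curve $C\cup_q E$, with local equation $uv=t$ at $q$.
\end{itemize}
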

In particular, the semistable loci are open but do not admit good moduli spaces.

\section{Open problems} \label{sec:open-problems}

Moduli theory has always been half science and half art. The science deals with tools for solving classification problems in algebraic geometry, the earliest of which are geometric invariant theory and the Keel-Mori theorem for constructing moduli spaces. The artform of moduli theory involves discovering and deciding upon the ideal moduli problems to study. The tools typically guide this artform, for instance in deciding which stability condition to use or which ``singular'' objects to parameterize in order to obtain a proper moduli space. 

The recent developments in the theory of algebraic stacks and the beyond GIT framework vastly generalize the previous generation of tools, and they provide $\Theta$-stratifications in addition to moduli spaces. Our hope is that these tools will enable significant advancements in the art of moduli theory. In this section, we survey several topics that are ripe for artistic attention.

\subsection{A nearly universal moduli functor}

Many moduli problems arise as substacks of the stack of ``schemes with structure''
\begin{equation} \label{E:universal_problem}
  \cM_n(\cX) : T \mapsto \left\{
  \vcenter{\hbox{$
    \xymatrix@R=10pt{ P \ar[d]^-{\pi} \ar[r]^u & \cX \\ T \ar@/^/[u]^-{s_i}}  
  $}}
  \middle\vert \,
   \vcenter{\hbox{$\begin{aligned} &\pi\text{ is flat and proper\footnotemark} \\ &s_1, \ldots, s_n \text{ are sections\footnotemark} \end{aligned}$}}
\right\},
\end{equation}
\addtocounter{footnote}{-1}
\footnotetext{To have any hope of being algebraic, one should require $\pi$ to be projective \'etale-locally over $T$.}
\stepcounter{footnote}
\footnotetext{Instead of sections $s_i \co T \to P$, one could parameterize pairs $(P, \Delta)$, which could be realized by mapping into a stack of Weil divisors.}

\noindent where $\cX$ is an algebraic stack. This stack becomes algebraic if one imposes a condition that equips $\pi$ canonically with a relatively ample line bundle and if $\cX$ has affine automorphism groups and is sufficiently nice (quasi-separated and locally of finite type). 

\begin{example}
    When $\pi$ is required to be the base-change of $C \to \Spec \base$, i.e., $P \cong C_T$, and $\cX = B\GL_r$, this stack is $\coprod_d \cBun_{r,d}$.
\end{example}

\begin{example}
    When the geometric fibers of $\pi$ are nodal curves of genus $g$ and $\cX$ is a projective variety, the Kontsevich stability condition gives the stack 
    $\cK_{g,n}(\cX)$ of stable maps \cite{kontsevich}, which is used to define Gromov--Witten invariants. This has been extended to DM stacks \cite{abramovich-vistoli} 
    and tame algebraic stacks \cite{aov}.   
\end{example}

There is a long history of attempts to interpolate between these two examples by considering the subfunctor of $\cM_n$ where the geometric fibers of $\pi$ are nodal curves and the target is a quotient stack $X/G$.   The theory of stable quasi-maps identifies substacks that are proper DM stacks when $X$ is affine and the semistable locus of $X/G$ is DM \cite{MR3126932}, or more recently for any stack $\cX$ that admits a proper good moduli space and contains a stable point \cite{dilorenzo-inchiostro--stable-maps}. In order to study wall-crossing behavior for these moduli spaces, it would be nice to have a larger stack that contains many different stacks of stable quasi-maps as the open pieces of different $\Theta$-stratifications. A version of this was achieved for a fixed smooth curve in \cite{hl-fh-gauged-maps} and for arbitrary nodal curves with maps to $B\GL_N$ \cite{halpernleistner2025quantumoperationsringsymmetric}.

\begin{problem}\label{prob:universal_example}
Identify a suitable substack of $\cM_n(\cX)$, as large as possible, that admits a $\Theta$-stratification whose centers have proper good moduli spaces.
\end{problem}

\begin{example}
    An interesting special case is when $\cX = BG$ for a reductive group $G$. For the open substack of $\cM_n(BG)$ parameterizing $G$-bundles on nodal curves, this is the question of identifying a stack that can be used to define ``Gromov-Witten'' invariants for $BG$.
    There have been prominent constructions of open substacks of $\cM_n(BG)$ that admit proper flat good moduli spaces over $\bar{M}_g$ by Caporaso for $G= \bG_m$ \cite{caporaso--univeral-picard} and Nagaraj-Seshadri for $G=\GL_n$ \cite{MR1687729} (extending earlier work by Gieseker for $G=\GL_2$ \cite{MR739786}).
    
    In higher dimensions, the stack of $G$-bundles is poorly behaved even on a fixed smooth projective variety $X$. For $G=\GL_n$, the only approach we are aware of is to work with the stack of torsion free sheaves \cite{Simpson1994ModuliI}. There are also various ways to use torsion free sheaves to enlarge the stack of $G$-bundles so that it admits a semistable locus with a projective good moduli space, but there are arbitrary choices involved, and the points added no longer parameterize $G$-bundles \cite{MR2153406, MR2450609, MR2097106, gómez2024modulistackprincipalrhosheaves}. As an alternative, one may consider the substack of $\cM_n(BG)$ parameterizing $G$-bundles on various degenerations of $X$. This approach is motivated by results in the study of Yang-Mills flow on the space of connections on higher dimensional K\"ahler manifolds \cite{MR2097548}.
\end{example}

\subsection{The stack of all curves} \label{subsec:stack-of-all-curves}
Let $\cM_g^{\all}$ be the stack of all proper 
curves (possibly non-reduced, non-equidimensional, and non-connected) of genus $g$, and recall that 
we are working over an algebraically closed field $\base$ of characteristic $0$. 
There is very little literature about $\cM_g^{\all}$, except for its construction
as an algebraic stack \cite[Thm.~B.1]{hall-singular}, \cite[Prop.~3.3]{dJHS}, \tag{0D5A}, 
and \cite[\S 6.4]{alper-moduli}. 

\begin{conjecture} \label{conj:stack-of-all-curves} For each $g \ge 0$, 
\begin{enumerate}[(1)]
  \item \label{conj:stack-of-all-curves-1} 
    $\cM_g^{\all}$ satisfies Vakil's Murphy's law. 
  \item \label{conj:stack-of-all-curves-2} 
    $\Pic(\cM_g^{\all}) = 0$.
  \item \label{conj:stack-of-all-curves-3}
    The structure map $\cM_g^{\all} \to \Spec \base$ is a categorical moduli space.
\end{enumerate}
\end{conjecture}
\begin{remark}
  Recall that the genus of a curve $C$ is defined as $g = 1 - \chi(\oh_C)$.  Since taking the disjoint union
  of a genus $g$ curve and $g$ copies of $\bP^1$ yields a genus $0$ curve, this construction defines maps 
  $\cM_g^{\all} \to \cM_0^{\all}$.  In this way, if $\coprod_g \cM_g^{\all}$ satisfies Murphy's Law, 
  then so will $\cM_g^{\all}$ for each $g$. 

  Part \eqref{conj:stack-of-all-curves-3} would imply that $\cM_g^{\all}$ is connected, a fact that 
  follows from the connectedness of the Hilbert schemes $\Hilb^P(\bP^N)$. 
  More interestingly, it was recently shown that the substack of connected, reduced curves is connected
  \cite{bozlee-connectedness}.  On the other hand, examples of Mumford \cite{mumford-pathologies4}
  and Pinkham \cite[Thm.~1.11]{pinkham-deformations} show that $\cM_g^{\all}$ has irreducible
  components consisting entirely of singular curves.  
\end{remark}

\subsection{Beyond GIT for singular curves} \label{subsec:beyond-git-curves}

In an effort to apply the Beyond GIT machinery to singular curves, we first need to settle 
the delicate question of which moduli stack of curves to consider. \Cref{conj:stack-of-all-curves} suggests that the stack of all curves
$\cM_g^{\all}$ does not even have any line bundles with which to define semistability. So instead, we consider the stack 
$\cM_g^{\rm cp} \subseteq \cM_g^{\all}$ of \emph{canonically polarized} curves 
$C$, i.e., $C$ is Gorenstein with $\omega_C$ ample, or its substack $\cM_g^{\rm cp, red}$ of reduced 
canonically polarized curves.  
The stack $\cM_g^{{\rm cp}}$ admits the line bundles $\lambda_k$ defined in 
\eqref{eqn:lambdak}, which allows us to construct $K$ and $\delta$ as linear combinations of 
$\lambda_1$ and $\lambda_2$.  There is also the stack of polarized curves, parameterizing pairs
$(C, L)$ consisting of a curve $C$ and an ample line bundle $L$, which admits line bundles analogous 
to $\lambda_k$.  

Restricting our attention to $\cM := \cM_g^{{\rm cp, red}}$ and 
$\cL_{\alpha} := K + \alpha \delta$, we consider the $\Theta$-semistable locus
$\cM^{\cL_{\alpha}-\ss}$ with respect to $\cL_{\alpha}$. 
The hope is that $\cM^{\cL_{\alpha}-\ss}$ provides an answer to \Cref{ques:logMMP}, i.e., that
$\cM^{\cL_{\alpha}-\ss}$ is an open substack of $\cM_g^{\all}$ and admits a projective good moduli 
space identified
with the log canonical model $\bar{M}_g(\alpha)$.

When $\alpha = 11/12$, $\cL_{\alpha}$ is a rational multiple of the CM line bundle $L_{\CM}$.  
Results in K-stability imply that $\cM^{L_{\CM}-\ss} = \bar{\cM}_g$.  Moreover, 
for any $1 \ge \alpha \ge 9/11$, the generalized slope inequalities of \cite{ctv} imply that every 
stable curve is $\cL_{\alpha}$-semistable.  The converse may be true but at the moment we can 
only destabilize non-stable curves in the larger stack $\cM_g^{\rm cp}$ containing non-reduced curves.   
Interestingly, Xiao's slope inequality \cite{xiao} implies that smooth curves are 
$\cL_{\alpha}$-semistable for $\alpha > (3g+8)/(8g+4)$, which is precisely 
the critical value for when the hyperelliptic locus is expected to be contracted. 

While it is expected that these stacks $\cM_g^{\rm cp}$ and $\cM_g^{\rm cp, red}$ are
neither $\Theta$- nor $\sS$-complete, this has not been verified yet.  However, recently 
Gori, Modin, and Pernice proved that  the substack
consisting of curves with only $A_n$-singularities is not $\Theta$- nor $\sS$-complete.  
Therefore, the first approach outlined in \S \ref{sec:beyond-git-completeness} of finding 
$\Theta$- and $\sS$-complete enlargements will likely not succeed.  The second 
approach seems more promising by defining a numerical invariant using the CM line bundle $L_{\cM}$ and a norm on test configurations, e.g.,  the L$^p$ norms or the minimum norm \cite[Def. 2.34 and Lem. 2.37]{xu-book}.

\begin{problem}  
  For the CM line bundle  $L_{\CM}$ and a suitable norm on test configurations, does the stack 
  $\cM_g^{\rm cp}$ admit a $\Theta$-stratification whose semistable locus is $\bar{\cM}_g$?
\end{problem}
  
While this problem may not have a clean answer, there is a recent striking result
providing some evidence that a $\Theta$-stratification could exist:   
for a unibranch monomial curve $C$ with $\bG_m$-action defined by a semigroup 
$\Gamma = \{\gamma_0, \gamma_1, \ldots\}$ and for embeddings
$C \into \bP^N$ defined by the image of the map
$\bP^1 \to \bP^N$ taking $t \mapsto [1, t^{\gamma_1}, t^{\gamma_2}, \ldots]$,  the Kempf 
destabilizing worst 1-parameter subgroup of $[C \subseteq \bP^N] \in \Chow(\bP^N)$
stabilizes as $N \to \infty$ \cite{jackson-swinarski}.

\subsection{Semistability for higher dimensional varieties}
A crowning achievement of several innovative advances in the MMP and singularity theory over the last decade has been the construction of projective \emph{KSBA moduli spaces}, parameterizing 
higher dimensional schemes (and pairs) with ample canonical divisors and slc singularities \cite{MR4566297}. These are the higher-dimensional analogues of $\overline{M}_{g,n}$. For varieties (and pairs) with anti-ample canonical class, we have the theory of $K$-stability (\Cref{E:K-stability}). In between these two extremes, little is known in general. 
The field seems to be stuck at the first step --- what is a reasonable moduli functor to study for schemes equipped with a (non-canonical) ample line bundle?
Some progress has been made on the Calabi--Yau case despite serious boundedness issues \cite{abbdilw}, \cite{bl-calabi-yau}.

$K$-semistability can be formulated for any pair $(X,L)$, consisting of a reduced scheme $X$ and an ample line bundle $L$, in terms of test configurations, flat families of projective schemes with a relatively ample bundle $(\tilde{X},\tilde{L})$ over $\Theta_{\base}$, along with an isomorphism $(\tilde{X}_1,\tilde{L}_1) \cong (X,L^{\otimes n })$ for some $n>0$ \cite{paul2004algebraicanalytickstability, MR2274514}. The tensor power on $L$ means that this is not a $\Theta$-stability condition on the naive stack parameterizing flat families of projective schemes along with a relatively ample line bundle.

It is tempting to define some kind of stack in which $(X,L)$ is identified with $(X,L^n)$, but in addition to significant technical issues, the substack of semistable objects will not be bounded. For example, if $(E,L)$ is a polarized elliptic curve, then after scaling $L$ it can degenerate to a ``necklace'' of rational curves of arbitrary length, and all of these limit points will be $K$-semistable. One possible solution is the following:

\begin{definition}
    For a fixed Hilbert polynomial $P(t) = a_0 + \cdots + a_n t^n$, let $\cM_P$ denote the stack that sends a scheme $T$ to the groupoid of pairs $(\pi : X \to T, \omega \in \operatorname{Coh}(X))$, where
\begin{enumerate}
    \item $\pi$ is a flat proper morphism with reduced, equidimensional, and connected geometric fibers of dimension $n$, and
    \item $\omega$ is flat and fiber-wise pure of dimension $n$ over $T$
    \item Locally over $T$, $\exists m>0$  such that the hull $\omega^{[m]}$ is a $\pi$-ample invertible sheaf on $X$ that has Hilbert polynomial $a_0 + a_1 (mt) + \cdots + a_n (mt)^n$, and
    \item $\forall m \in \bZ$, $\omega^{[m]}$ is $T$-flat and commutes with base change.
\end{enumerate}
\end{definition}

The last two conditions are algebraic by \cite[Thm.~21]{kollár2009hullshusks}. Applied to the relative canonical sheaf $\omega = \pm \omega_{X/T}$ this is the ``Koll\'ar condition'' used to define families in the Fano and general type setting. But now we are allowing $\omega$ to be arbitrary.

Because we are not identifying $(X,\omega)$ with $(X,\omega^{[m]})$, we do not immediately encounter the boundedness issues above. On the other hand, a polarized variety $(X,L)$ admits more test configurations in $\cM_P$ than it does in the naive stack of polarized varieties. For instance, if $(\tilde{X},\tilde{L})$ is a test configuration over $\bA^1$ with $(\tilde{X}_1, \tilde{L}_1) \cong (X,L^m)$, $\tilde{L}$ might not have an $m^{th}$-root, so it does not necessarily come from a test configuration of $(X,L)$. But if the total space of $\tilde{X}$ is normal, we can choose a divisor $D = \sum_i a_i D_i$ on $X$ with $\oh_X(D) \cong L$, then defining $\tilde{D} = \sum_i a_i \overline{\bG_m \cdot D_i}$, we have that $\omega = \oh_{\tilde{X}}(\tilde{D})$ is an equivariant flat family of reflexive sheaves on $\tilde{X}$ with $\omega^{[m]} \cong \tilde{L} \langle n \rangle$ for some $n \in \bZ$. Therefore, it is possible that there are enough test configurations in $\cM_P$ to give the same notation of $K$-semistability as the one in \cite[Thm.~3.9]{MR2274514}, without introducing as many boundedness issues.

In fact, there is a natural refinement of the notion of $K$-semistability: Denote the projection $\pi : \cM_P \times X \to \cM_P$. Locally over $\cM_P$, we choose an $m$ such that $\omega^{[m]}$ is a $\pi$-ample, and consider the polynomial $\lambda(t) = b_0 + b_1 t + \cdots + b_{n+1} t^{n+1} \in \Pic(\cM_P)_\bQ \otimes \bQ[t]$ defined by
\[
\det(R\pi_\ast((\omega^{[m]})^q)) = b_0 + b_1 qm + b_2 (qm)^2 + \cdots + b_{n+1} (qm)^{n+1}.
\]
This the the higher dimensional analogue of \eqref{eqn:lambdak}. We can use these classes to define a \emph{polynomial semistability} condition: a test configuration $f : \Theta \to \cM_P$ is defined to be destabilizing if
\[
\wt f^\ast\left(t P(t) \frac{b_{n+1}}{a_n} - \lambda(t)\right) \in \bQ[t]
\]
has a negative leading order coefficient in $t$. We say that $(X,\omega) \in \cM_P$ is \emph{polynomial semistable} if there is no destabilizing filtration with $f(1) \cong (X,\omega)$, and \emph{polynomial stable} if the leading coefficient is strictly positive for any non-trivial test configuration. The polynomial sequence of line bundles $t P(t) \frac{b_{n+1}}{a_n} - \lambda(t)$ is chosen so that it is unchanged if one replaces $\omega \mapsto \pi^\ast(L) \otimes \omega$ for any line bundle $L$ on $T$, and its leading term is the CM line bundle \cite{MR1312688}. This implies that $K$-stable $\Rightarrow$ polynomial stable $\Rightarrow$ polynomial semistable $\Rightarrow$ $K$-semistable.

\begin{question}
    Does the polynomial semistable locus of $\cM_P$ admit a separated, finite-type good moduli space, i.e., is it open, bounded, and $\Theta$- and $\sS$-complete? Is it the open piece of a $\Theta$-stratification of $\cM_P$?
\end{question}

\begin{remark}
    If we pull the polynomial sequence of $\bQ$-line bundles $t P(t) \frac{b_{n+1}}{a_n} - \lambda(t)$ back along the morphism $\Fanos_{n,V} \to \coprod_P \cM_P$, the leading term is the CM line bundle, used to define $K$-semistability of Fano varieties. Therefore the notion of polynomial semistability \emph{refines} $K$-semistability. Furthermore, the CM line bundle must have weight $0$ with respect to any test configuration in $\Fanos^{K-\rm{ss}}_{n,V}$, so $\Fanos_{n,V}^{\rm{poly-ss}} \subseteq \Fanos^{K-\rm{ss}}_{n,V}$ is the semistable locus for the sub-leading-order piece of $t P(t) \frac{b_{n+1}}{a_n} - \lambda(t)$. It follows by ``variation of good moduli space'' \cite[\S 5.6]{halpernleistner2022structureinstabilitymodulitheory} that $\Fanos_{n,V}^{\rm{poly-ss}}$ admits a good moduli space $M^{\rm{poly}}_{n,V}$ that is projective over $M^{\rm{ps}}_{n,V}$, and $t P(t) \frac{b_{n+1}}{a_n} - \lambda(t)$ is relatively ample for $t \gg 0$. The moduli space $M^{\rm{poly}}_{n,V}$ can separate $K$-semistable varieties that are contracted to the same point in $M^{\rm{ps}}_{n,V}$.
  \end{remark}

\subsection{Moduli of objects in linear categories}

Besides template \eqref{E:universal_problem}, another flavor of commonly-studied moduli problems concerns moduli of objects in an additive category. A version of particular interest is the moduli of semistable objects for a Bridgeland stability condition on a triangulated dg-category. The most general existence result in this setting is \cite[Thm.~2.3.1]{halpernleistner2025spaceaugmentedstabilityconditions}. Over a field $\base$ of characteristic $0$, if $\cC$ is a smooth and proper dg-category and $\sigma$ is a stability condition on $\cC$, then the moduli of semistable objects of any class in $K_0(\cC)$ admits a proper good moduli space if for any $G \in \cC$, there is a constant $c$ such that
\begin{equation}\label{E:mass_hom_bound}
    \dim \Hom(G,E) \leq c |Z(E)|
\end{equation}
for all semistable $E \in \cC$, where $Z(-)$ is the central charge. This builds upon a rich literature of constructions of moduli spaces of object in abelian categories 
\cite{toda}, \cite{toda-piyaratne}, \cite{bayer-macri-families}, 
\cite[\S 7]{ahlh-existence}, and \cite{lampetti}. In \cite{halpernleistner2025spaceaugmentedstabilityconditions}, it is conjectured that the ``mass-Hom'' inequality \eqref{E:mass_hom_bound} holds for all stability conditions on any smooth and proper category.

\subsection{Projectivity}

Besides the Nakai--Moishezon criterion, the only general tool we are aware of to establish projectivity of a good moduli space is Kollár's Ampleness Criterion \cite{kollar-projectivity}, which is an adaptation of an earlier criterion due to Viehweg \cite{viehweg-weak-positivity}.  Kollár's Ampleness Criterion has been applied, for instance, to prove both the projectivity of the KSBA moduli space and the K-moduli space.  However, this criterion doesn't apply to many other moduli spaces, e.g., Bridgeland semistable objects in a triangulated dg-category, resulting in a frustrating gap in the intrinsic approach to moduli theory. 

Faltings introduced an approach in \cite{faltings-stable-G-bundles-and-connections} to show ampleness of determinantal line bundles by explicitly constructing global sections, which Faltings applied to semistable vector bundles (and Higgs bundles) on a smooth curve. This was recently extended to reconstruct the Uhlenbeck compactification, parameterizing slope semistable bundles on a smooth surface, as a projective variety by reinterpreting it as the Bridgeland moduli space on the vertical wall \cite{tajakka}.  However, 
it appears difficult to extend this approach to other Bridgeland moduli spaces.  

Intersection-theoretic techniques can also be used to prove projectivity of moduli spaces,  such as
Cornalba and Harris's approach to prove that $\bar{M}_g$ is projective  \cite{cornalba-harris,cornalba-projectivity}.  This approach was extended in \cite{second-flip3} to show the 
projectivity of the good moduli spaces of the moduli stacks $\bar{\cM}_g(\alpha)$ of singular curves
discussed in \S\ref{sec:an-intrinsic-approach}, which in addition leveraged
the finite generation results of \cite{bchm} to verify semiampleness, but this method 
unfortunately only seems to apply to a space that is a birational model of a variety already known to be projective.

\subsection{Applications of combinatorial structures}

The degeneration space $\Deg(\cX,x)$ is the geometric realization of a combinatorial object called the \emph{degeneration fan}, a structure analogous to a semisimplicial set. If $X$ is a normal toric variety and $x \in X(\base)$ is generic, then one can recover the fan of the toric variety from the degeneration fan. When $\cX = BG$, one can essentially recover the spherical building of $G$. This raises the following:
\begin{question}
    If $\cX$ is a normal algebraic stack over $\base$ and $x \in \cX(\base)$ is a dense point, to what extent does the degeneration fan of $(\cX,x)$ encode the geometry of $\cX$? Can one describe vector bundles on $\cX$ or the cohomology of $\cX$ from combinatorial data on this fan?
\end{question}

An analogous question applies to the affine degeneration space and its underlying combinatorial object. Another application of the affine degeneration space would be to studying the following:
\begin{question} \label{Q:canonical}
If $\cX$ has a separated good moduli space and $\xi \in \cX(K)$ admits an extension over $\Spec R$, to what extent is there a \emph{canonical} extension over $\Spec(R[\pi^{1/m}])$ for some $m>0$?
\end{question}
The meaning of \emph{canonical} is somewhat open-ended.
When $\xi$ is polystable, one can use Kirwan's canonical blow-up procedure \cite{MR799252} to find a canonical extension. But it would be interesting to have a construction that uses the geometry of $\ADeg(\cX,\xi)$, and that works for semistable $\xi$ as well. Pursuing an analogy between the Hadamard space $\ADeg(\cX,\xi)$ and the Riemannian symmetric space $G/K$, where $G$ is a complex reductive group and $K$ a maximal compact subgroup, it would be interesting to make a connection with well-known differential-geometric methods in the theory of symplectic reduction (see for instance \cite{MR4387640}). One may hope to discover a canonical point of $\ADeg(\cX,\xi)$ as the minimizer of some natural convex ``energy'' functional, in analogy with \cite{2014arXiv1411.6786M, MR3722572}.

\subsection{Local structure theorems in positive characteristic}
 While the Local Structure Theorems hold over any characteristic, their
 applicability is very restrictive in positive characteristic due to the
 \emph{linearly reductive} hypothesis: every smooth linearly reductive group in
 characteristic $p$ is an extension of a finite group whose order is prime to
 $p$ by a torus.  For instance, $\GL_n$ is {\it not} a linearly reductive group
 scheme in positive characteristic. The class of \emph{reductive} groups, on the
 other hand, is much broader including classical groups such as $\GL_n$.  In particular, 
  the Local Structure Theorems are unknown for moduli stacks of sheaves
 or complexes in positive characteristic. 
 
While it is too optimistic to expect the existence of an étale neighborhood which is 
a quotient $\Spec A/G_x$ by the stabilizer, it is reasonable to hope that Local Structure Theorem II (\Cref{thm:local-structure}) still holds. 

\begin{conjecture} \label{conj:local-structure-charp} 
  Let $\cX$ be a quasi-separated algebraic stack, locally of finite type over a
  noetherian scheme, with affine automorphism groups.  If $x \in \cX$ is a
  closed point with reductive stabilizer, then there exists an étale morphism 
	$$(\Spec A/\GL_n, w) \to (\cX,x)$$ 
  inducing an isomorphism of automorphism groups at $w$.
\end{conjecture}

Resolving this conjecture would allow us to remove the characteristic zero
hypothesis from the Existence Theorem (\Cref{thm:existence}) below after
replacing the notion of a good moduli space with an \emph{adequate moduli space}
\cite{alper-adequate}.  On the other hand, there is not much evidence for this conjecture
except for the cases of linearly reductive or
finite discrete automorphisms.  
It is not even known for a quotient stack $X/\GL_n$ of a quasi-projective variety
in positive characteristic. In many ways, the theory is at an analogous stage to
GIT in the 1960s, which at first only allowed for constructions of moduli spaces
in characteristic 0.  It was only later, after Haboush resolved Mumford's conjecture
\cite{haboush}, that GIT could be widely applied in positive characteristic.  Perhaps
unsurprisingly, one of the most promising strategies for
\Cref{conj:local-structure-charp}  is to apply vanishing results in
characteristic $p$ representation theory as in Haboush's proof, and specifically
to exploit the magical cohomological properties of Steinberg representations.

\bibliographystyle{dary}
\bibliography{icm-refs}

\end{document}